\DeclareMathOperator{\cotanh}{cotanh}
\DeclareMathOperator{\sech}{sech}
\DeclareMathOperator{\arccosh}{arccosh}
\DeclareMathOperator{\arcsinh}{arcsinh}
\DeclareMathOperator{\arctanh}{arctanh}
\DeclareMathOperator{\arccoth}{arccoth}
\newcommand{\reals}{\mathbb{R}}
\theoremstyle{plain}
\newtheorem{stel}{Theorem}
\newtheorem{gevolg}{Corollary}
\newtheorem{lemma}{Lemma}
\theoremstyle{remark}
\newtheorem{opm}{Remark}
\author{Patrick De Leenheer\footnote{Department of Mathematics, University of Florida, email: deleenhe@ufl.edu}}
\begin{document}

\title{Optimal placement of Marine Protected Areas}

\date{}

\maketitle

\begin{abstract}
Overfishing can lead to the reduction or elimination of fish populations and the degradation or even destruction of their habitats. 
This can be prevented by introducing Marine Protected Areas (MPA's), regions in the ocean or along coastlines where fishing is controlled. MPA's can also lead to larger fish densities outside the protected area through spill-over, 
which in turn may increase the fishing yield. A natural question in this context, is where exactly to establish 
an MPA, in order to maximize these benefits. This problem is addressed along a one-dimensional stretch of coast-line, 
by first proposing a model for the fish dynamics. Fish are assumed to move diffusively, and are subject to recruitment, natural death and harvesting through fishing. The problem is then cast as an optimal control problem 
for the steady state equation corresponding to the PDE which models the fish dynamics. The functional being maximized 
is a weighted sum of the average fish density and the average fishing yield. It is shown that optimal controls exist, and that 
the form of an optimal control -and hence the location of the MPA- is determined by two key model parameters, namely 
the size of the coast, and the weight of the average fish density appearing in the functional. If these parameters are large enough 
-and precisely how large, can be calculated exactly- the results indicate when and where an MPA should be established. The results indicate that an MPA always takes the form of a Marine Reserve, where fishing is prohibited. 
The main mathematical tool used to prove the results is Pontryagin's maximum principle.

\end{abstract}

\section{Introduction}
Marine Protected Areas (MPA's), see \cite{gerber2003,white11} for reviews, are regions in oceans or along coastlines where fishing is controlled, whereas Marine Reserves are regions were fishing is prohibited.
MPA's have been proposed 
as a fisheries management tool and contrast the more traditional approaches which rely on limiting spatially uniform 
harvesting rates. 
MPA's are advocated to prevent extinction of endangered or over-fished species, to restore their habitats (for instance, by preventing 
fishing nets to be  dragged along the ocean floor), and to promote species diversity. On the other hand, MPA's 
could damage the interests of commercial fisheries by decreasing overall fishing yields.
Although 
these arguments appear to be perfectly reasonable intuitively, there are several other factors that could influence the impact of MPA's:
\begin{itemize}
\item Although earlier mathematical models \cite{mangel98,hastings99} did not include spatial structure explicitly, more recent 
modeling incorporates spatial effects for both larvae \cite{botsford01,gaines2003} and adults \cite{moffitt09,langebrake}.
Some species such as crustaceans or reef fish, are fairly sedentary, whereas others are highly mobile. If MPA's increase 
density or biomass of fish inside the protected area, then 
more mobile species can easily escape the protection afforded by an MPA. Therefore, they are more easily exposed 
to being caught, which may lead to increasing, rather than decreasing fishing yields. In addition, the non-homogeneous nature of fish habitats, also impacts the design of MPA networks \cite{crowder2000}.
\item
More often than not, fish are part of intricate food webs. It is reasonable to expect that the effect of an MPA on a specific species 
depends on the role it plays in the web (predator, competitor, mutualist or resource) \cite{baskett07,kellner2011}.
\item
Many species are also characterized by different stages, such as larval, juvenile and mature or reproductive stages, each with their own movement patterns. The impact of an MPA will most likely depend on the interplay between the these spatially and temporally varying scales.
\cite{botsford01,gaines2003,mangel2005}.
\item
On relatively short time scales, fisheries can create evolutionary pressure to select for species reaching maturity sooner, but leading to smaller-sized adults and therefore to lower biomass, thus negatively affecting yields. MPA's are believed to mitigate these adverse effects, promoting 
sustainable fisheries \cite{baskett05,dunlop2009}.
\end{itemize}
These additional factors suggest that the decision to implement an MPA is multi-faceted, and needs to be based on appropriate 
measures which can be investigated by both mathematical models, and analysis of field data.

The purpose of this paper is to present a mathematical framework to aid 
in the decision of whether or not it would be beneficial to introduce an MPA, and if so, where to implement it. A novel objective measure capturing 
the effect of the MPA is proposed. It takes the form of a weighted sum that consists of the yield on one hand, and the average fish density on the other. 
The motivation for choosing this measure is that it incorporates two of the main measures that have been used in the past, 
namely yield and density, coupled with the fact mentioned earlier, that MPA's are believed to have opposite effects on these measures. 
This leads to a trade-off problem, and the natural context to consider it is provided by optimal control theory \cite{lee-markus,sontag,liberzon}. In fact, optimal control theory 
offers a much wider array of possible harvesting strategies, than the ones provided by Marine Reserves where  fishing is prohibited in specific locations, or by the classical approach where fishing is allowed everywhere, but at a lower-than-maximal  capacity. Indeed, in an optimal control framework, it is a priori allowed that at every location along the coastline, the harvest rate takes on an arbitrary 
value between a prescribed minimum and maximum value, effectively blending the two approaches. This implies that the optimal strategy could be a complicated function of the location along the coast line, at least in principle. 
However, the analysis shows that there are scenarios where the optimal strategy is to allow fishing at maximal capacity everywhere. 
This happens when the weight of the average density in the measure being maximized is below a certain threshold. On the other hand, 
when the weight is above the threshold, the optimal strategy is still to allow fishing everywhere at maximal capacity, if the size of the 
coastline is small. But, there is a critical value for the coastline size, above which the optimal strategy consists of installing a single Marine Reserve in the middle of the coast line. In other words, although a priori, there is no reason why the optimal strategy should take the form of a Marine Reserve, the analysis reveals that the best strategy corresponds to the implementation of a single (as opposed to a network of)  Marine Reserves. This is a feature common to many optimal control strategies, known as the bang-bang principle \cite{lee-markus,sontag,liberzon}. 

This paper is not the first to propose the use of optimal control in the context of MPA's, and follows the lead of \cite{neubert}, perhaps the first paper to advocate this approach. However, both the model, the analysis, and the results here, deviate from those in \cite{neubert} in several respects. First, in \cite{neubert} only 
the yield is maximized. Here, as mentioned earlier, a linear combination of yield and density is maximized, effectively incorporating the influence exerted by conservationist groups. 
Secondly, both papers propose a reaction-diffusion equation to model the fish dynamics, and although they share the same diffusion term which describes how fish move, they use different reaction terms. The one proposed here is an affine function of the fish density, as 
opposed to the logistic one used in \cite{neubert}. The latter is appropriate to model closed systems, which are subject to density dependent reproduction, whereas the former is more suited to the modeling of open systems, where recruitment is spatially uniform through the  well-mixing of 
larvae, perhaps caused by ocean circulation. Nevertheless, both models assume the same harvesting term, which is part of the reaction term in the PDE. Thirdly, the logistic term in \cite{neubert} leads to nonlinear systems in the optimal control problem, that cannot be solved explicitly, and therefore \cite{neubert} resorts to 
an analysis of their numerical solution. Here on the other hand, the optimal control problem can be solved analytically, and therefore 
the structure of the optimal control can be calculated and analyzed in terms of the model parameters. 
Finally, the conclusions of both papers are qualitatively different. As pointed out in \cite{neubert}, optimal harvesting strategies may consist of complicated networks of MPA's -in some cases there are infinitely many disjoint regions along the finite 
coastline where fishing is prohibited- a scenario which cannot be implemented in practice. Here on the other hand, the optimal strategy is always very simple, and consists of at most one MPA located in the middle of the coastal region. Moreover, thanks to the availability 
of certain implicit formulas, the location of the MPA can be determined, which 
is only possible numerically in \cite{neubert} for reasons mentioned earlier.

\section{Model and problem formulation}
Consider the following model:
\begin{eqnarray}
U_T&=&DU_{XX}+R-\mu U -H(X)U,\;\; -L/2<X<L/2,\nonumber\\ 
U(-L/2,T)&=&U(L/2,T)=0 \textrm{ for all }T\geq 0\label{unscaled}
\end{eqnarray}
Here, points along the scalar coastline of length $L>0$ are represented by the spatial variable $X$ taking values in 
the interval $[-L/2,L/2]$, and $U(X,T)$ denotes the fish density 
at location $X$ and time $T$. The boundary condition corresponds to a lethal or absorbing boundary, where fish cannot survive. 
Although it is not considered here, a no-flux, or Neumann boundary condition could be assumed instead,  corresponding to a scenario where fish cannot enter or exit via the boundary. Similarly, a mixed-type boundary 
condition could be investigated as well, when recruitment of adult fish also occurs through the boundary. 
The fish are assumed to diffuse with a constant diffusion constant $D>0$, are recruited at constant 
rate $R>0$, die at constant per capita rate $\mu>0$ and are harvested at per capita rate $H(X)$ which depends on 
the location $X$. We note that this model does not include any density-dependent features, since recruitment 
occurs at a constant rate $R$ in space and time, and is independent of the current fish density $U$. This scenario is motivated by reef fish whose habitats are restricted to specific reef patches. The boundary of such a patch is lethal,  possibly due to the presence of a predator patrolling the patch boundary. Recruitment happens after larvae have settled in the patch. The assumption of a uniform recruitment rate corresponds to a case where adult fish abundantly generate larvae over many reef patches, which in turn are dispersed over these patches by diffusion and/or advection due to ocean currents. For a similar model which takes a specific density-dependent functional form into account, see 
\cite{neubert} where a logistic growth form is assumed (that is, the reaction term $R-\mu U$ is replaced 
by a logistic $rU(1-U/K)$ for positive constants $r$ and $K$). 

Since the fishermen's fleet is limited, it is reasonable to assume that $H$ takes values in the interval $[0, {\bar H}]$, where ${\bar H}>0$ denotes the 
maximal harvesting rate. In  what follows the following notation for the average of a function 
$F(X)$, defined on the interval $[-L/2,L/2]$, is used:
$$
<F>\;\; := \; \frac{1}{L}\int_{-L/2}^{L/2}F(X)dX
$$

The problem  addressed here is to find the function $H(X)$ which maximizes the steady state functional:
\begin{equation}\label{unscaled-cost}
J(H(X))=<H(X)U(X)>+Q<U(X)>,
\end{equation}
where $Q\geq 0$ is a fixed weight parameter, and $U(X)$ is a steady state of $(\ref{unscaled})$ using $H(X)$.
This functional reflects the tradeoff between two terms. The first one, $<HU>$, 
represents the average harvest by fishermen or average yield, whereas the second denotes the average fish density, multiplied by a parameter $Q$ which measures the relative weight attributed to the average fish density compared to the average yield. It will be small in regions whenever there is little pressure by conservationists to limit fishing, and large otherwise. It is determined by  
local customs, traditions, culture and politics, and agencies that regulate fishing. Note that $Q$ has the dimension 
of a rate, since the first term, the fishing yield, is a rate as well (average amount of fish caught per unit of time). 
It is the purpose of this paper to 
assist  in making recommendations on placing MPA's in a way that maximizes 
$J$. 
Note also that no costs related to fishing activities are incorporated in the functional $J$. For more traditional  bio-economic theories focused on 
trade-offs between fishing yields and fishing costs, but neglecting the effect of conservationist pressure, see 
\cite{clark,alain}.

Several model parameters can be scaled out as follows. Defining
\begin{eqnarray}
u&=&U/(R\mu^{-1}),\; t=\mu T,\; x=X/\sqrt{D\mu^{-1}},\; l=L/(\sqrt{D\mu^{-1}}), \nonumber\\ 
h(x)&=&H(X)/\mu,\; {\bar h}={\bar H}/\mu,\; q=Q/\mu,\; j=J/R \label{scale}
\end{eqnarray}
yields the scaled model:
\begin{eqnarray}
u_t&=&u_{xx}-\left(1+h(x)\right)u +1,\;\;  -l/2<x<l/2,\nonumber \\ 
u(-l/2,t)&=&u(l/2,t)=0,\textrm{ for all } t\geq 0\label{scaled}
\end{eqnarray}
with scaled functional:
\begin{equation}\label{scaled-cost}
j(h(x))=<h(x)u(x)>+q<u(x)>,
\end{equation}
which needs to be maximized over functions $h(x)$ taking values in $[0,{\bar h}]$, and where $u(x)$ is the steady 
state of $(\ref{scaled})$ corresponding to $h(x)$. Note that now the averages appearing in the scaled functional, are averages over the scaled interval $[-l/2,l/2]$.
The scaled problem contains only $3$ parameters: 
the scaled weight parameter $q\geq 0$, 
the coastal length $l>0$,  and the scaled maximum harvesting rate ${\bar h}>0$. The main results will first be phrased 
in terms of these parameters, but later in the Discussion Section they are translated into statements in terms of the parameters of the unscaled problem, using the scaling laws in $(\ref{scale})$. 
Also  note that the scaled functional could be rewritten more compactly as $j({\tilde h}(x))=<{\tilde h}(x)u(x)>$, where ${\tilde h}(x)$ takes values in the interval $[q,q+{\bar h}]$, although this point of view is not considered in what follows.

\section{Existence of optimal controls}
The steady state problem associated to $(\ref{scaled})$ 
can be re-written as a system of two first-order equations by letting $v=u'$ where $'$ denotes $d/dx$:
\begin{eqnarray}
u'&=&v \label{ss1}\\
v'&=&\left(1+h(x)\right)u -1 \label{ss2}
\end{eqnarray}
with boundary condition
\begin{equation}\label{BC-state}
u(-l/2)=u(l/2)=0
\end{equation}
The problem is to find a measurable function $h(x)$, taking values in the interval $[0,{\bar h}]$ for a.e. x in $[-l/2,l/2]$ such that for this particular choice of $h(x)$, a solution $(u(x),v(x))$ to $(\ref{ss1})-(\ref{ss2})$ exists that satisfies the 
boundary condition $(\ref{BC-state})$, and the constraint that $u(x)\geq 0$ for all $x\in[-l/2,l/2]$. The latter constraint is natural since $u$, the scaled fish density, cannot be negative. Every function $h(x)$ satisfying these requirements is called an {\it admissible control} (note that, at least in principle, such functions might not even exist), and the problem is now to find an admissible control which maximizes the scaled functional $(\ref{scaled-cost})$. Existence results for optimal controls are discussed in \cite{lee-markus,berkovitz-paper,seierstad}.

\begin{stel}\label{existence}
There exists an admissible control $h^*(x)$ defined for $x\in [-l/2,l/2]$, which maximizes the scaled functional $(\ref{scaled-cost})$.
\end{stel}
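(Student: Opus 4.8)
The plan is to establish existence of an optimal control via the direct method of the calculus of variations (equivalently, via Filippov's theorem / the standard existence results cited). The three ingredients I would verify are: (i) the set of admissible controls is nonempty; (ii) the functional $j$ is bounded above on this set, so that a finite supremum exists; and (iii) a maximizing sequence of controls has a limit that is again admissible and achieves the supremum, using weak-$*$ compactness of $L^\infty([-l/2,l/2])$ together with continuous dependence of the state $(u,v)$ on the control.

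First I would address nonemptiness, which is also where I expect the main obstacle to lie. The subtlety is that an admissible control must not only take values in $[0,\bar h]$ and produce a solution of $(\ref{ss1})$--$(\ref{ss2})$ satisfying the boundary condition $(\ref{BC-state})$, but that solution must additionally satisfy the nonnegativity constraint $u(x)\ge 0$. For a fixed measurable $h(x)$ with values in $[0,\bar h]$, the steady-state boundary value problem $u''=(1+h(x))u-1$ with $u(\pm l/2)=0$ is linear in $u$ with a strictly positive coefficient $1+h(x)\ge 1$, so by a maximum-principle argument the unique solution is automatically nonnegative (the right-hand side forcing $+1$ pushes $u$ up, and the absorbing boundary data are zero). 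I would make this precise by noting that the operator $-u''+(1+h)u$ is coercive and invertible on $H_0^1$, so a unique weak solution exists, and then invoke the weak maximum principle to conclude $u\ge 0$. This shows \emph{every} such $h$ is admissible, so the admissible set is nonempty and in fact equals the full control set $\{h:[-l/2,l/2]\to[0,\bar h] \text{ measurable}\}$.

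Next I would handle boundedness of $j$. Since $0\le h\le\bar h$ and $u$ is nonnegative and bounded uniformly in $h$ (the uniform bound on $u$ follows from the maximum principle applied to the comparison solution with $h\equiv 0$, giving $0\le u\le u_0$ where $u_0$ solves the problem with no harvesting), the quantities $<hu>$ and $q<u>$ are both finite and bounded above independently of $h$. Hence $M:=\sup_h j(h)$ is finite, and I can choose a maximizing sequence $h_n$ with $j(h_n)\to M$.

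Finally I would extract a convergent subsequence and pass to the limit. Because $\{h_n\}\subset L^\infty$ is bounded, by Banach--Alaoglu there is a subsequence $h_n\rightharpoonup h^*$ weak-$*$ in $L^\infty$, with $h^*$ still valued in $[0,\bar h]$ a.e.\ since this constraint set is convex and weak-$*$ closed. The essential analytic step is then continuous dependence: I would show that the corresponding states $u_n$ converge (strongly in $L^2$, say, via the uniform $H^1$ bounds from the coercivity estimate plus Rellich compactness) to the state $u^*$ associated with $h^*$. The one point requiring care is the nonlinear product term $<h_nu_n>$: weak-$*$ convergence of $h_n$ alone does not pass to limits in a product, but combined with \emph{strong} convergence of $u_n$ in $L^2$ it does, since a weak-$*$ times strong product converges. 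With this, $j(h_n)\to j(h^*)=M$, so $h^*$ is the desired optimal admissible control. The main obstacle throughout is thus the interplay between the nonnegativity constraint and the product nonlinearity in $j$; both are resolved by the maximum principle and the weak-$*$/strong compactness pairing respectively.
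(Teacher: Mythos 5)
Your proof is correct, but it follows a genuinely different route from the paper. The paper invokes a Filippov--Cesari type existence theorem (a corollary to Theorem 4 of Lee--Markus) and verifies its hypotheses one by one: compactness of the initial and target sets (which requires a separate invariance argument to cut the unbounded sets $\{u=0\}$ down to compact segments of the $v$-axis), compactness of the control set, nonemptiness of the admissible family (shown only for \emph{constant} controls, via an explicit $\sech$/$\cosh$ formula), a uniform bound on trajectories, and convexity of the extended velocity set $V(u,v)$, which holds because the dynamics and running cost are affine in $h$. You instead run the direct method on the scalar boundary value problem $-u''+(1+h)u=1$, $u(\pm l/2)=0$: coercivity and the weak maximum principle show that \emph{every} measurable $[0,\bar h]$-valued control is admissible and that $u$ is uniformly bounded by the $h\equiv 0$ solution; then Banach--Alaoglu plus Rellich lets you pass to the limit along a maximizing sequence, with the weak-$*$/strong pairing handling the product $\langle h_n u_n\rangle$. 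The two arguments use the affineness in $h$ at exactly the analogous point --- convexity of the velocity set in the paper, convergence of the weak-$*$-times-strong product in yours --- and each would break down without it. What your route buys is self-containedness (no black-box citation) and the stronger by-product that the admissible class is the \emph{entire} set of measurable $[0,\bar h]$-valued controls, a fact the paper only obtains later and indirectly; it also sidesteps the paper's somewhat delicate compactification of the initial and target sets. What the paper's route buys is generality: the Lee--Markus theorem applies verbatim to nonlinear state equations (such as the logistic model of Neubert) where your coercivity, comparison, and uniqueness arguments for the BVP would all require separate justification.
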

\begin{proof}
The proof follows from a Corollary to Theorem $4$ in \cite{lee-markus}. It will be shown that the assumptions $(1)-(5)$ and conditions (a), (b) and (c) from that Theorem hold: 
(1) {\it The initial set $X_{-l/2}$ and target set $X_{l/2}$ are compact.}  
In the problem considered here, the initial and target sets follow from the boundary condition $(\ref{BC-state})$, namely $X_{-l/2}=\{(u,v)|u=0\}=X_{l/2}$. Clearly, these sets are not compact, but it will be shown that without loss of generality they can be redefined to be compact sets as follows:
\begin{equation}\label{initial-target}
X_{-l/2}=\{(u,v)|u=0,0\leq v\leq 1\}\textrm{ and } X_{l/2}=\{(u,v)|u=0,-1\leq v \leq 0\}.
\end{equation}
To see this, note that every solution of $(\ref{ss1})-(\ref{ss2})$ initiated in 
the domain
$$
R=\{(u,v) | u\geq 0, v\geq 0, u+v\geq 1\},
$$
remains in $R$ for every measurable $h$ which takes values in $[0,{\bar h}]$ for a.e. $x$ in $[-l/2,l/2]$. Moreover, 
the vector field of system $(\ref{ss1})-(\ref{ss2})$ is transversal to the boundary part of $R$ where $u=0$. This implies that no solution initiated on 
this part of the boundary of $R$ at $x=-l/2$ can reach the set $\{(u,v)|v=0\}$ at $x=l/2$. Furthermore, every solution 
initiated at $x=-l/2$ in an initial condition $(0,v)$ with $v<0$ is such that $u$ becomes negative for $x$ larger then, and near $-l/2$, violating the state constraint. These facts show that there is no loss of generality in choosing the initial set $X_{-l/2}$ as in $(\ref{initial-target})$. 
A similar argument shows that the target set can be chosen to be 
$X_{l/2}=\{(u,v)|u=0,-1\leq v \leq 0\}$ (but now the reflection of the domain $R$ with respect to the 
$u$-axis should be considered, which turns out to be backward invariant).
(2) {\it The control set $[0,{\bar h}]$ is nonempty and compact}.  This is obvious.
(3) {\it The set of state constraints is described by a family of inequality constraints $g_1(x)\geq 0,\dots,g_r(x)\geq 0$, where each function $g_i$ is continuous}. In the problem considered here, there is a single state constraint given by $g(u,v)\geq 0$, where $g(u,v):=u$, which is continuous.
(4) {\it Let ${\cal F}$ denote the family of admissible controls}. This is merely the introduction of the notation for the family of admissible controls.
(5) {\it For each admissible control $h(x)$, the functional is given by: 
$$
j(h(x))=\int_{-l/2}^{l/2}\frac{(q+h(x))u(x)}{l}dx,
$$
where the above integrand $(1+h)u/l$ is a $C^1$ function in the variables $(u,v,h)$.}  This is obvious as well.

The conditions (a), (b) and (c) of Theorem $4$ in \cite{lee-markus} are next: (a) {\it The family ${\cal F}$ is not empty}. 
It is claimed that all the constant functions $h(x)={\hat h}$ for all $x$ in $[-l/2,l/2]$, where ${\hat h}$ is an arbitrarily chosen number in the interval $[0,{\bar h}]$, belong to ${\cal F}$. To prove this it suffices to show that for every ${\hat h}$, there exists a solution $(u_{{\hat h}}(x),v_{{\hat h}}(x))$, $-l/2\leq x\leq l/2$, to system $(\ref{ss1})-(\ref{ss2})$ with $h(x)={\hat h}$, such that $u_{{\hat h}}(x)\geq 0$ for $x\in [-l/2,l/2]$, and $(\ref{BC-state})$ holds. It is easily verified that the following pair
$$
\begin{pmatrix}
u_{{\hat h}}(x)\\
v_{{\hat h}}(x)
\end{pmatrix}
=
\begin{pmatrix}
\frac{1}{1+{\hat h}}\left[1-\sech \left(\sqrt{1+{\hat h}}l/2\right)\cosh \left(\sqrt{1+{\hat h}}x\right) \right]\\
-\frac{\sech \left(\sqrt{1+{\hat h}}l/2\right)}{\sqrt{1+{\hat h}}}\sinh \left(\sqrt{1+{\hat h}}x\right)
\end{pmatrix}
$$
satisfies these constraints. (b) {\it There exists a uniform bound $b$ for every solution $(u(x),v(x))$, $-l/2\leq x\leq l/2$, in response to an arbitrary control $h(x)$ in ${\cal F}$:}
$$
||(u(x),v(x))||\leq b,\textrm{ for } -l/2\leq x \leq l/2
$$
This follows immediately from the variation of parameters formula for the solution of $(\ref{ss1})-(\ref{ss2})$, given that the initial set $X_{-l/2}$ is compact, that $h$ takes values in the compact set $[0,{\bar h}]$ a.e., and because $x\in [-l/2,l/2]$, a bounded set. 
(c) {\it The extended velocity set
$$
V(u,v):=\left \{ \left(-\frac{(q+h)u}{l},v,(1+h)u-1\right) \vert h \in [0,{\bar h}] \right \}
$$
is convex in $\reals^3$ for each fixed $(u,v)$.} This follows from the fact that for each fixed $(u,v)$, 
the image of the convex set $[0,{\bar h}]$ under the affine 
map $h\rightarrow \left(-\frac{(q+h)u}{l},v,(1+h)u-1\right)$, is convex.

\end{proof}

\section{The maximum principle and some preliminary properties of an optimal control}

Introduce the Hamiltonian associated to $(\ref{ss1})-(\ref{ss2})$ and functional $(\ref{scaled-cost})$:
\begin{equation}\label{Hamiltonian}
H(u,v,\lambda_1,\lambda_2,h)=\frac{1}{l}(h+q)u+\lambda_1v+\lambda_2 \left(\left(1+h\right)u -1\right),
\end{equation}
where $(\lambda_1,\lambda_2)$ are called the adjoint variables. These play a role which is similar to Lagrange multipliers, which appear 
in the search for extrema of functions under constraints. 

By Pontryagin's maximum principle \cite{lee-markus}, any maximum for the functional attained at some
\newline 
$\left(u^*(x),v^*(x),\lambda_1^*(x),\lambda_2^*(x),h^*(x)\right)$ must maximize the Hamiltonian with respect to $h(x)$:
\begin{equation}\label{max-Ham}
H(u^*(x),v^*(x),\lambda_1^*(x),\lambda_2^*(x),h(x))\leq H(u^*(x),v^*(x),\lambda_1^*(x),\lambda_2^*(x),h^*(x)), 
\end{equation}
for all $x$ in $[-l/2,l/2]$ and $h(x)$ in $[0,{\bar h}]$, and must solve the Hamiltonian system
\begin{eqnarray}
u'&=&\partial H/\partial \lambda_1=v \label{Ham-sys1}\\
v'&=&\partial H/\partial \lambda_2=(1+h(x))u-1\label{Ham-sys2}\\
\lambda_1'&=&-\partial H/\partial u=-(h(x)+1)\lambda_2-\frac{h(x)+q}{l}\label{Ham-sys3}\\
\lambda_2'&=&-\partial H/\partial v=-\lambda_1\label{Ham-sys4}
\end{eqnarray}
with boundary and transversality conditions:
\begin{eqnarray}
u(-l/2)&=&u(l/2)=0\label{BC}\\
\lambda_2(-l/2)&=&\lambda_2(l/2)=0\label{TC}
\end{eqnarray}

\begin{opm} {\bf (Lack of abnormal multiplier)}

In principle,  the following Hamiltonian should have been considered:
\begin{equation}\label{Hamiltonian2}
{\hat H}(u,v,\lambda_0,\lambda_1,\lambda_2,h)=\lambda_0\frac{1}{l}(h+q)u+\lambda_1v+\lambda_2 \left(\left(1+h\right)u -1\right),
\end{equation}
where $\lambda_0\geq 0$, is a constant, known as the {\it abnormal multiplier}. In this case, the the adjoint equation reads:
\begin{eqnarray}
\lambda_1'&=&-\partial {\hat H}/\partial u=-(h(x)+1)\lambda_2-\lambda_0\frac{h(x)+q}{l}\label{Ham-sys-new1}\\
\lambda_2'&=&-\partial {\hat H}/\partial v=-\lambda_1\label{Ham-sys-new2}
\end{eqnarray}
with transversality conditions $(\ref{TC})$, and the maximum principle implies the existence of a nonzero triple \cite{liberzon}:
\begin{equation}\label{nontrivial}
\begin{pmatrix}
\lambda_0\\
\lambda_1(x)\\
\lambda_2(x)
\end{pmatrix}\neq \begin{pmatrix}
0\\
0\\
0
\end{pmatrix}, -l/2 \leq x \leq l/2,
\end{equation}
where $(\lambda_1(x),\lambda_2(x))$ solves $(\ref{Ham-sys-new1})-(\ref{Ham-sys-new2})$ with transversality conditions $(\ref{TC})$.  
It can be shown by contradiction that necessarily $\lambda_0\neq 0$. The proof is straightforward and omitted.
Since $\lambda_0\neq 0$, it can be rescaled to take the value $1$. This explains why the Hamiltonian $(\ref{Hamiltonian2})$ was specialized to $(\ref{Hamiltonian})$.
\end{opm}
\begin{opm} \label{pos-constraint} {\bf (State constraint)} 

Another simplification is that in the definition of the Hamiltonian, the natural non-negativity constraint $u\geq 0$ which a solution corresponding to an optimal control should posses, was not included. For more on maximum principles for systems with state constraints, see 
\cite{knowles,seierstad,evans}. 
However, it can be shown that this constraint is automatically satisfied. In fact, 
if $u^*(x)$ is the $u$-component of a solution of the Hamiltonian system $(\ref{Ham-sys1})-(\ref{Ham-sys4})$, 
then 
\begin{equation}\label{u-pos}
u^*(x)>0,\textrm{ for all } x\in (-l/2,l/2). 
\end{equation}
\end{opm}

Returning to the application of the maximum principle using the Hamiltonian in $(\ref{Hamiltonian})$. 
Since $H$ is linear in the control variable $h$, it follows from $(\ref{max-Ham})$ that
\begin{equation}
h^*(x)=\begin{cases}\label{optimal}
0, \textrm{ if } u^*(x)(1/l+\lambda_2^*(x))<0\\
{\bar h}, \textrm{ if } u^*(x)(1/l+\lambda_2^*(x))>0
\end{cases}
\end{equation}
The set of points $\{(u,v,\lambda_1,\lambda_2)|u=0\textrm{ or }\lambda_2=-1/l\}$ is called the {\it switching surface} of the Hamiltonian system. 
In Remark $\ref{pos-constraint}$ it was already shown that an optimal solution $(u^*(x),v^*(x),\lambda_1^*(x),\lambda_2^*(x))$ cannot belong to the part of the switching surface where $u=0$, other than at the initial and final times $x=\pm l/2$ of the control horizon, see 
in particular $(\ref{u-pos})$. 
Notice that this fact, combined with $(\ref{optimal})$ and the transversality condition $(\ref{TC})$, also shows that $h^*(x)={\bar h}$ for all $x$ near $x=-l/2$ and $x=l/2$. 
In other words, both at the beginning and end of the control horizon, an optimal control equals its maximal value ${\bar h}$. 
In addition, $(\ref{u-pos})$ implies that the optimal control $h^*(x)$ takes the form:
\begin{equation}
h^*(x)=\begin{cases}\label{optimal2}
0, \textrm{ if } \lambda_2^*(x)<-1/l\\
{\bar h}, \textrm{ if } \lambda_2^*(x))>-1/l
\end{cases}
\end{equation}

The question remains what values $h^*(x)$ takes for other $x$-values in the control interval. In other words, will 
the state $(u^*(x),v^*(x),\lambda_1^*(x),\lambda_2^*(x))$ of the Hamiltonian system ever cross, or remain on the (smaller) switching surface
\begin{equation}\label{switch-surface}
{\cal S}=\{(u,v,\lambda_1,\lambda_2)|\lambda_2=-1/l\}
\end{equation}
If the state remains on ${\cal S}$ for $x$ in some subinterval of $[-l/2,l/2]$, then the value of $h^*(x)$ is not determined by 
$(\ref{optimal})$. The control is said to be {\it singular} if this happens, and a more detailed analysis would be required to determine $h^*(x)$. 
However, it is claimed that here, optimal controls cannot be singular. Indeed, suppose that $\lambda_2^*(x)=-1/l$ for all $x$ in some interval ${\cal I}\subset [-l/2,l/2]$. 
To keep the solution on ${\cal S}$ during ${\cal I}$, requires that $(\lambda_1^*)'=0=(1-q)/l$, which is impossible if $q\neq 1$.  
If $q=1$, the adjoint problem reduces to:
\begin{eqnarray*}
{\lambda_1}'&=&-(h(x)+1)[\lambda_2+1/l]\\
{\lambda_2}'&=&-\lambda_1
\end{eqnarray*}
To keep the solution on ${\cal S}$, requires that $\lambda_1(x)=0$ for all $x$ in ${\cal I}$. This is possible indeed, but it implies that the state of the adjoint system is at the 
steady state $(\lambda_1,\lambda_2)=(0,-1/l)$, and thus remains there for all $x\in[-l/2,l/2]$. 
This in turn implies that the transversality condition $(\ref{TC})$ cannot be satisfied. Thus, optimal controls cannot be singular.

Although some features of an optimal control have emerged from the foregoing discussion, the Hamiltonian system needs to be investigated more closely, in order to better understand the structure of an optimal control. 
This is done in the next section.

\section{Solving the Hamiltonian system}
In the previous section it was shown that optimal controls are never singular, and also that 
near $x=\pm l/2$, an optimal control takes its maximal value ${\bar h}$. The question arises whether 
an optimal control remains constant and equal to ${\bar h}$ on the entire control horizon $[-l/2,l/2]$, or whether it 
ever switches to zero at some point when the state of the Hamiltonian system crosses ${\cal S}$. 
Notice that if this happens, an optimal control must necessarily switch back to ${\bar h}$ at least once 
later during in the interval $[-l/2,l/2]$, since it must equal ${\bar h}$ for all $x$ near $-l/2$. To preview the main results, they are briefly 
summarized here.  
A bound for $q$ will be calculated, such that below this bound, there is no switch. If $q$ exceeds this bound, on the 
other hand, no switches occur if $l$ falls below some threshold, and exactly two switches 
occur when $l$ is above it. 

If $h(x)={\bar h}$ or $h=0$ for all $x$, then the adjoint system is a linear time-invariant system of the form
\begin{eqnarray}
\lambda_1'&=&-a\lambda_2 - \frac{b}{l} \label{saddle1}\\
\lambda_2'&=&-\lambda_1\label{saddle2}
\end{eqnarray}
for suitable $a>0$ and $b>0$. Straightforward calculations establish the behavior of this system:
\begin{lemma}\label{saddle}
System $(\ref{saddle1})-(\ref{saddle2})$ has a unique equilibrium point $E=(0,-b/(al))$ which is a saddle. The stable manifold 
is a line through $E$ with slope $1/\sqrt{a}$, and the unstable manifold is a line through $E$ with slope $-1/\sqrt{a}$.
\end{lemma}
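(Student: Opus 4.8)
The plan is to treat $(\ref{saddle1})$--$(\ref{saddle2})$ as an affine linear system and read off all the claimed structure from its coefficient matrix. First I would locate the equilibrium by setting the right-hand sides to zero: equation $(\ref{saddle2})$ forces $\lambda_1=0$, and substituting into $(\ref{saddle1})$ gives $-a\lambda_2-b/l=0$, hence $\lambda_2=-b/(al)$. This yields $E=(0,-b/(al))$, and uniqueness is immediate because the linear part is invertible (its determinant will be seen to equal $-a\neq 0$).

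Next I would shift coordinates to center the system at $E$. Writing $\mu_1=\lambda_1$ and $\mu_2=\lambda_2+b/(al)$, the constant term $-b/l$ cancels and the system becomes the homogeneous linear system $\mu'=A\mu$ with
$$
A=\begin{pmatrix} 0 & -a \\ -1 & 0 \end{pmatrix}.
$$
For a linear system of this kind the stable and unstable manifolds are exactly the eigenspaces of $A$ (there is no linearization error to control), so the problem reduces to an eigenvalue/eigenvector computation, transported back to lines through $E$.

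Then I would compute the spectrum of $A$ from $\det(A-\sigma I)=\sigma^2-a=0$, giving $\sigma=\pm\sqrt{a}$. Since $a>0$, the two eigenvalues are real with opposite signs, which is precisely the definition of a saddle. For the negative (stable) eigenvalue $\sigma=-\sqrt{a}$, the eigenvector equation yields the direction $(\sqrt{a},1)$, i.e.\ a line of slope $d\lambda_2/d\lambda_1=1/\sqrt{a}$; for the positive (unstable) eigenvalue $\sigma=+\sqrt{a}$, it yields the direction $(-\sqrt{a},1)$, i.e.\ slope $-1/\sqrt{a}$. Translating these eigenspaces back by $+b/(al)$ in the second coordinate produces the two lines through $E$ with the claimed slopes.

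The computation is entirely routine; the only points demanding care are bookkeeping rather than substance: matching each eigenvalue to the correct manifold (stable versus unstable), and reporting the slope as $d\lambda_2/d\lambda_1$ with $\lambda_1$ on the horizontal axis, so that the signs $\pm 1/\sqrt{a}$ emerge exactly as stated. Because the system is affine, no appeal to the stable manifold theorem is needed---the invariant lines are global, not merely local, so the conclusion holds for all $x$.
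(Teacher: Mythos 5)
Your proof is correct and is exactly the routine computation the paper has in mind---the paper itself gives no proof, introducing the lemma only with the phrase ``straightforward calculations establish the behavior of this system.'' The equilibrium $E=(0,-b/(al))$, the eigenvalues $\pm\sqrt{a}$, and the assignment of the slope $1/\sqrt{a}$ to the stable line and $-1/\sqrt{a}$ to the unstable line all check out.
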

The phase portrait of system $(\ref{saddle1})-(\ref{saddle2})$ is illustrated in Figure $\ref{saddle-portrait}$.
\begin{figure}[ht]
\centering
\includegraphics[width=5in]{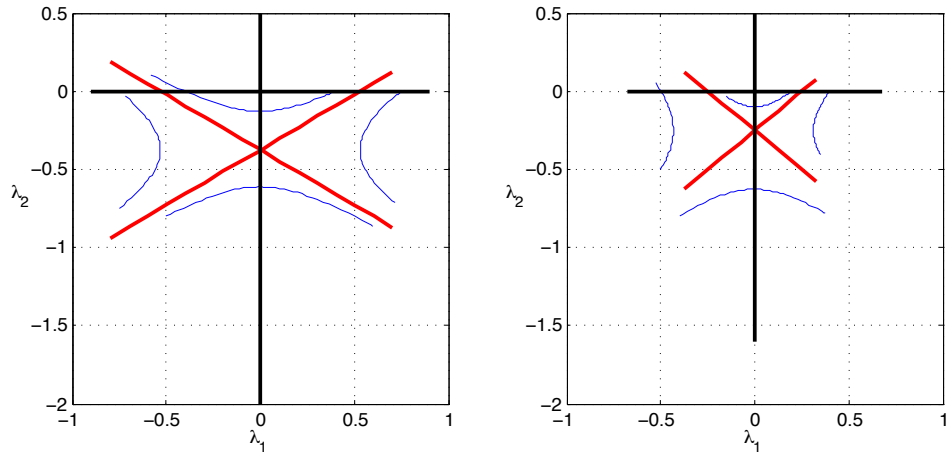}
\caption{Phase portrait of system $(\ref{saddle1})-(\ref{saddle2})$, with $a=2$, $b=1.5$ and $l=2$ on the left, and 
with $a=1$, $b=0.5$ and $l=2$ on the right. 
Stable manifolds are the straight lines with positive slope (in red), whereas unstable manifolds are the straight lines with negative slope (also in red).}
\label{saddle-portrait}
\end{figure}

\subsection{The case $0<q\leq 1$.}
It is first shown that switches in the value of $h(x)$ are not possible in this case. 

Consider the left panel of Figure $\ref{no-switch}$, which depicts some orbits of the adjoint system $(\ref{Ham-sys3})-(\ref{Ham-sys4})$ when $h(x)={\bar h}$.  Notice that the steady state has coordinates $(0,-({\bar h}+q)/({\bar h}+1)l)$, and thus it does not lie below the switching line 
$\{(\lambda_1,\lambda_2)|\lambda_2=-1/l\}$. The problem is to determine whether or not there are solutions starting on the $\lambda_1$-axis at $x=-l/2$ which reach the horizontal switching line $\{(\lambda_1,\lambda_2)|\lambda_2=-1/l\}$ at some $x_s< l/2$. As it turns out, there are no such solutions, and a proof is briefly sketched next. 
\begin{itemize}
\item
If $\lambda_1(-l/2)\leq 0$, this is impossible, as the solution will remain in the second quadrant  because it is forward invariant. Notice also 
that the transversality $(\ref{TC})$ at $x=l/2$ cannot hold for such solution.
\item
If $0<\lambda_1(-l/2)\leq \lambda_s$, where $\lambda_s:=({\bar h}+q)/(\sqrt{{\bar h}+1}l)$ is the intercept of the stable manifold (the straight line with positive slope (in red) 
in Figure $\ref{no-switch}$), this is also impossible. Indeed, this follows because the region that lies above the stable and unstable manifold 
(the straight line with negative slope (in red) in Figure $\ref{no-switch}$) is forward invariant, and because the lowest point of this region 
-the steady state- does not lie below the switching line.
\item
If $\lambda_1(-l/2)>\lambda_s$, the solution may reach the switching line at some $x_s<l/2$. Assume it happens and 
denote the state of the adjoint system at $x=x_s$ by $(\lambda_1(x_s),-1/l)$. Note that necessarily 
$\lambda_1(x_s)\geq 0$ because the region which is part of the fourth quadrant which lies below the stable manifold 
and above the unstable manifold, is forward invariant. 
When the solution reaches the switching line, it will cross it, and thus the control variable $h$ now switches from ${\bar h}$ to $0$. Thus the adjoint system becomes $(\ref{Ham-sys3})-(\ref{Ham-sys4})$ but now with $h(x)=0$, whose orbits are depicted in the right panel of Figure $\ref{no-switch}$. From the orbits it is clear that 
for all $x>x_s$, the solution will remain below the switching line $\lambda_2=-1/l$. This follows from the fact that the region $\{(\lambda_1,\lambda_2)|\lambda_1\geq 0, \lambda_2\leq -1/l\}$ is forward invariant. Indeed, if $\lambda_1=0$, then ${\dot \lambda_1}=-\lambda_2-q/l\geq (1-q)/l\geq 0$, whereas if $\lambda_2=-1/l$, then ${\dot \lambda_2}=-\lambda_1\leq 0$. Therefore, a solution 
that ever reaches the switching line, will remain below it for all future times, and 
hence it can never satisfy the transversality condition $(\ref{TC})$ at $x=l/2$. Thus, the possibility that an optimal control exhibits a switch,  has been ruled out.
\end{itemize}
\begin{figure}[ht]
\centering
\includegraphics[width=5in]{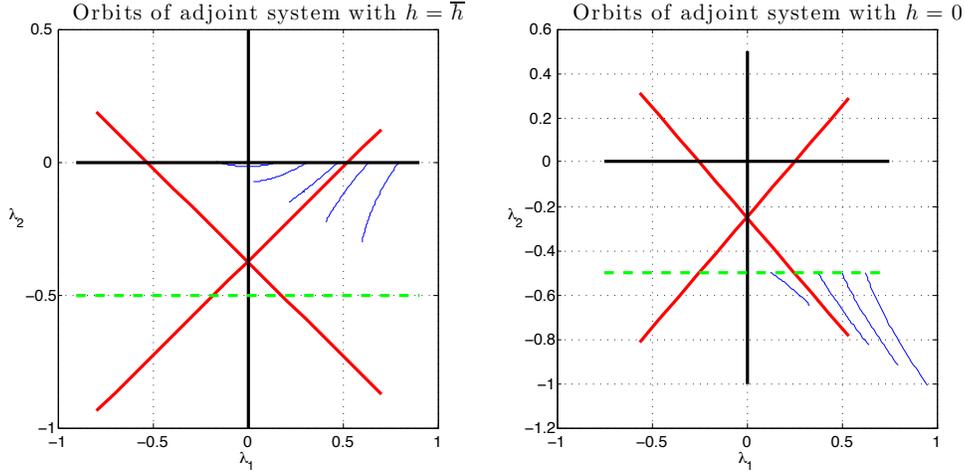}
\caption{Phase portrait of system $(\ref{Ham-sys3})-(\ref{Ham-sys4})$, with $l=2$, $q=0.5$ and $h={\bar h}=1$ on the left, and 
$h=0$ on the right. Stable manifolds are the straight lines with positive slope (in red), whereas unstable manifolds are straight lines with negative slope (also in red). 
The dashed curve (in green) is the switching line where $\lambda_2=-1/l$.}
\label{no-switch}
\end{figure}
The foregoing discussion shows that every solution of the adjoint system $(\ref{Ham-sys3})-(\ref{Ham-sys4})$ with $h(x)={\bar h}$
which satisfies the transversality conditions $(\ref{TC})$, must be such that $0<\lambda_1(-l/2)<\lambda_s$. In fact, it will be shown that such a solution always exists, and moreover that it is unique. To see this, solve the adjoint system with $h(x)={\bar h}$ and initial condition $(\lambda_0,0)$ at $x=-l/2$, where the parameter $\lambda_0$ takes values in the interval $(0,\lambda_s)$:
\begin{equation}\label{opla}
\begin{pmatrix}
\lambda_1(x)\\
\lambda_2(x)
\end{pmatrix}
=
\begin{pmatrix}
-\lambda_s\sech \left( \sqrt{{\bar h}+1}\beta \right)\sinh\left(\sqrt{{\bar h}+1}(x+l/2-\beta) \right)\\
\frac{\lambda_s}{\sqrt{{\bar h}+1}}\left[
\sech \left(\sqrt{{\bar h}+1}\beta \right)\cosh\left(\sqrt{{\bar h}+1}(x+l/2-\beta) \right) 
-1\right]
\end{pmatrix},
\end{equation}
where $\beta$ is uniquely defined by
\begin{equation}\label{betala}
\tanh \left( \sqrt{{\bar h}+1}\beta\right)=\frac{\lambda_0}{\lambda_s}
\end{equation}

Let $T>-l/2$ denote the time that the solution reaches the $\lambda_1$-axis 
again. Then $\lambda_2(T)=0$, and thus
$$
\cosh \left(\sqrt{{\bar h}+1}(T+l/2-\beta) \right)=\cosh\left( \sqrt{{\bar h}+1}\beta\right)
$$
or writing $T$ explicitly as a function of $\lambda_0$ using $(\ref{betala})$:
$$
T(\lambda_0)= \frac{2}{\sqrt{{\bar h}+1}}\arctanh \left( 
\frac{\lambda_0}{\lambda_s}
\right)-\frac{l}{2}
$$
Notice that $\lim_{\lambda_0 \rightarrow 0} T=-l/2$, $\lim_{\lambda_0\rightarrow \lambda_s} T=+\infty$ and $T$ is increasing. Hence, there is a unique $\lambda_0^*$ such that 
\begin{equation}\label{lambda-sol}
T(\lambda_0^*)=l/2.
\end{equation}
Plugging $\lambda_0=\lambda_0^*$ in $(\ref{opla})$ and $(\ref{betala})$ yields the unique 
corresponding $(\lambda_1^*(x),\lambda_2^*(x))$  components of the solution of the Hamiltonian system $(\ref{Ham-sys1})-(\ref{Ham-sys4})$ that satisfy the boundary conditions $(\ref{TC})$.

The $(u,v)$ components corresponding to an optimal solution 
of the Hamiltonian system $(\ref{Ham-sys1})-(\ref{Ham-sys4})$ when $h(x)={\bar h}$ for all $x$ in $[-l/2,l/2]$ can now be determined as well. 
Some orbits of $(\ref{Ham-sys1})-(\ref{Ham-sys2})$ are depicted in Figure $\ref{no-switch-state}$. Arguing as was done for the adjoint system, it 
is not hard to show that the only possible solutions of $(\ref{Ham-sys1})-(\ref{Ham-sys2})$ with $h(x)={\bar h}$ satisfying $(\ref{BC})$ 
must be such that the initial condition $(0,v_0)$ at $x=-l/2$ is such that $0<v_0<1/\sqrt{{\bar h}+1}$. This is because if $v_0\leq 0$ or if 
$v_0\geq 1/\sqrt{{\bar h}+1}$, then the boundary condition $(\ref{BC})$ at $x=l/2$ cannot be satisfied. If $0<v_0<1/\sqrt{{\bar h}+1}$, 
the solution is given by:
\begin{equation}\label{opl}
\begin{pmatrix}
u(x)\\
v(x)
\end{pmatrix}
=
\begin{pmatrix}
-\frac{v_0}{\sqrt{{\bar h}+1}\sinh \left(\sqrt{{\bar h}+1}\alpha\right)}\cosh \left[\sqrt{{\bar h}+1}\left(x+l/2-\alpha\right) \right]+\frac{1}{{\bar h}+1}\\
-\frac{v_0}{\sinh \left(\sqrt{{\bar h}+1}\alpha \right)}\sinh \left[\sqrt{{\bar h}+1}(x+l/2-\alpha) \right]
\end{pmatrix},
\end{equation}
where $\alpha$ is uniquely defined  by
\begin{equation}\label{alfa}
\cotanh \left(\sqrt{{\bar h}+1}\alpha\right)=\frac{1}{v_0\sqrt{{\bar h}+1}}
\end{equation}
Let $T_0>-l/2$ be such that $u(T_0)=0$, then by $(\ref{opl})$ and $(\ref{alfa})$
$$
\cosh \left[\sqrt{{\bar h}+1}(T_0+l/2-\alpha) \right]=\cosh \left[\sqrt{{\bar h}+1} \alpha \right],
$$ 
or, since $T_0>-l/2$, that $T_0=2\alpha - l/2$. Using $(\ref{alfa})$ once more,   $T_0$ can be written explicitly as a function of $v_0$:
\begin{equation}\label{T_0-v_0}
T_0(v_0)=\frac{2}{\sqrt{{\bar h}+1}}\arccoth \left( \frac{1}{v_0\sqrt{{\bar h}+1}}\right) -\frac{l}{2}
\end{equation}
Notice that $\lim_{v_0 \rightarrow 0} T_0=-l/2$, $\lim_{v_0\rightarrow 1/\sqrt{{\bar h}+1}} T_0=+\infty$ and $T_0$ is increasing. Hence, there is a unique $v_0^*$ such that $T_0(v_0^*)=l/2$, namely
\begin{equation}\label{u-sol}
v_0^*=\frac{1}{\sqrt{{\bar h}+1}\coth\left(\sqrt{{\bar h}+1}l/2\right)}.
\end{equation}
Plugging $v_0=v_0^*$ in $(\ref{opl})$ and $(\ref{alfa})$ yields the unique 
corresponding $(u^*(x),v^*(x))$  components of the solution of the Hamiltonian system $(\ref{Ham-sys1})-(\ref{Ham-sys4})$ that satisfy the boundary conditions $(\ref{BC})$.
\begin{figure}[ht]
\centering
\includegraphics[width=3in]{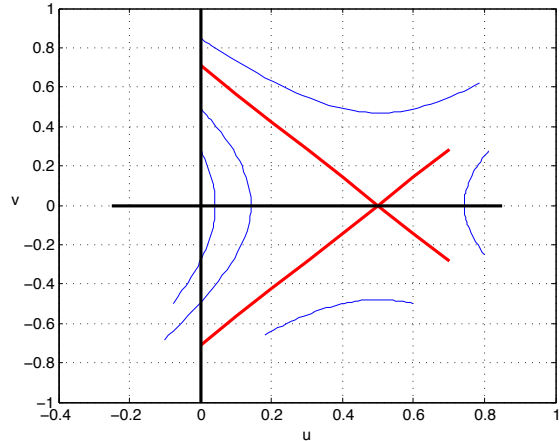}
\caption{Phase portrait of system $(\ref{Ham-sys1})-(\ref{Ham-sys2})$, with $l=2$, $q=0.5$ and $h={\bar h}=1$.  
The stable manifold is the straight line with negative slope (in red), whereas the unstable manifolds is the straight line with positive slope (also in red).}
\label{no-switch-state}
\end{figure}

Summarizing the results obtained in this subsection, and combining them with Theorem $\ref{existence}$, there follows:
\begin{stel} \label{q-less-than-1}
If $0<q\leq 1$, then 
there is a unique optimal control $h^*(x)={\bar h}$ for all $x$ in $[-l/2,l/2]$ which maximizes the scaled functional $(\ref{scaled-cost})$ for the 
steady state problem $(\ref{ss1})-(\ref{BC-state})$.  
The corresponding optimal fish density $u(x)=u^*(x)$ is given by $(\ref{opl})-(\ref{alfa})$ with $v_0=v_0^*$, where $v_0^*$ is defined in $(\ref{u-sol})$.
\end{stel}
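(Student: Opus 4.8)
The plan is to assemble the pieces already established in this subsection into a single argument, combining them with the existence result of Theorem \ref{existence} and the necessary conditions furnished by Pontryagin's maximum principle. First I would invoke Theorem \ref{existence} to guarantee that at least one admissible optimal control $h^*$ exists. Because it is optimal, the maximum principle applies: the associated $(u^*,v^*,\lambda_1^*,\lambda_2^*)$ must solve the Hamiltonian system $(\ref{Ham-sys1})-(\ref{Ham-sys4})$ together with the boundary conditions $(\ref{BC})$ and the transversality conditions $(\ref{TC})$, while $h^*$ itself must obey the pointwise characterization $(\ref{optimal2})$.

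Next I would narrow the class of candidates. The foregoing discussion already shows that an optimal control cannot be singular, and that $h^*(x)={\bar h}$ for $x$ near $\pm l/2$. The three-case analysis carried out in this subsection---which rests on the forward invariance of the relevant regions for the adjoint flow first with $h={\bar h}$ and then with $h=0$, together with the observation that for $0<q\le 1$ the equilibrium of the $h={\bar h}$ system does not lie below the switching line---shows that any solution reaching the line $\lambda_2=-1/l$ remains strictly below it thereafter, and hence cannot meet $(\ref{TC})$ at $x=l/2$. Thus no switch is possible, and combined with the fact that $h^*={\bar h}$ near both endpoints this forces $h^*(x)={\bar h}$ for all $x\in[-l/2,l/2]$.

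Finally I would establish that this candidate is unique. Restricting to $h\equiv {\bar h}$, the solution of the adjoint system satisfying $(\ref{TC})$ is parametrized by the single shooting value $\lambda_0$, and the monotonicity of $T(\lambda_0)$ together with its limiting values yields a unique $\lambda_0^*$ solving $(\ref{lambda-sol})$. Likewise the state trajectory satisfying $(\ref{BC})$ is parametrized by $v_0$, and the monotonicity of $T_0(v_0)$ produces the unique $v_0^*$ of $(\ref{u-sol})$, which determines $u^*$ through $(\ref{opl})-(\ref{alfa})$. Hence exactly one control satisfies all of the necessary conditions.

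The logical crux---and the step I would be most careful about---is the passage from necessary to sufficient. The maximum principle only asserts that an optimizer \emph{must} satisfy these conditions; it does not by itself certify that a point satisfying them is optimal. What closes the gap is that Theorem \ref{existence} guarantees an optimizer exists, that this optimizer is subject to the necessary conditions, and that the analysis above shows those conditions admit the single solution $h^*\equiv{\bar h}$. Therefore the optimizer must coincide with this unique candidate, establishing optimality and uniqueness simultaneously.
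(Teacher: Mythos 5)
Your proposal is correct and follows essentially the same route as the paper: the phase-portrait/invariance argument ruling out switches when $0<q\leq 1$, the shooting arguments giving the unique $\lambda_0^*$ and $v_0^*$, and the closing step of combining the existence result of Theorem \ref{existence} with the uniqueness of the solution to the necessary conditions. The paper performs this last step implicitly ("Summarizing\dots and combining them with Theorem \ref{existence}"), whereas you spell out the necessary-versus-sufficient issue explicitly, which is a faithful reading of the intended logic rather than a different argument.
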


\subsection{The case $q>1$.}
Some orbits of the adjoint system $(\ref{Ham-sys3})-(\ref{Ham-sys4})$ are depicted in Figure $\ref{adjoint-switch}$, using $h(x)={\bar h}$ on the left and $h(x)=0$ on the right. In view of $(\ref{optimal2})$, solutions of the adjoint system follow orbits of the left panel with 
$h(x)={\bar h}$ as long as $\lambda_2>-1/l$, and those of the right with $h(x)=0$  whenever $\lambda_2<-1/l$. 
\begin{figure}[ht]
\centering
\includegraphics[width=5in]{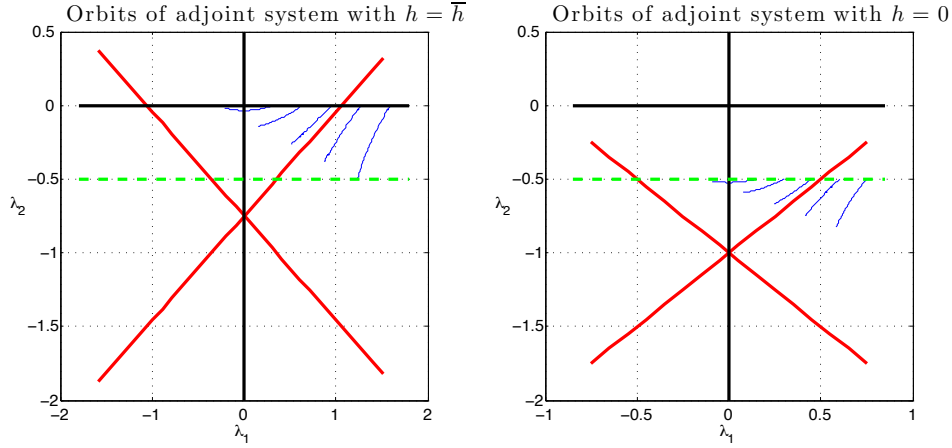}
\caption{Phase portrait of system $(\ref{Ham-sys3})-(\ref{Ham-sys4})$, with $l=2$, $q=2$ and $h={\bar h}=1$ on the left, and 
$h=0$ on the right. Stable manifolds are the straight lines with positive slope (in red), whereas unstable manifolds are the straight lines  with negative slope (also in red). The dashed curve (in green) is the switching line where $\lambda_2=-1/l$.}
\label{adjoint-switch}
\end{figure}
Defining $F_1$ and $F_2$ as the (autonomous) vector field of  
the adjoint system $(\ref{Ham-sys3})-(\ref{Ham-sys4})$ when $h(x)={\bar h}$ and $h(x)=0$ respectively, the adjoint system can be rewritten as an autonomous system:
\begin{equation}\label{ad-feedback}
{\dot \lambda}=
\begin{cases}
F_1(\lambda),\textrm{ if } \lambda \in {\cal S}_a\\
F_2(\lambda), \textrm{ if } \lambda \in {\cal S}_b
\end{cases}
\end{equation}
where ${\cal S}_a:=\{(\lambda_1,\lambda_2)|\lambda_2>-1/l\}$ and 
${\cal S}_b:=\{(\lambda_1,\lambda_2)|\lambda_2<-1/l\}$ are the regions above and below the switching line respectively.
The objective is to find  solutions of $(\ref{ad-feedback})$ that satisfy $(\ref{TC})$. 
Actually, it will shortly become clear that thanks to a symmetry property of the adjoint system, it suffices to consider solutions solutions defined on 
just $[-l/2,0]$ (half of the control horizon $[-l/2,l/2]$) that start on the $\lambda_1$-axis at $x=-l/2$ and end on the $\lambda_2$-axis at $x=0$. 

Some notation is introduced first. Let $R:(\lambda_1,\lambda_2)\longrightarrow (-\lambda_1,\lambda_2)$ denote 
the reflection with respect to the $\lambda_2$-axis.  
\begin{lemma}\label{sym}
For  $i=1,2$, there holds that:
\begin{equation}\label{symmetry}
F_i\circ R=-R\circ F_i
\end{equation}
The regions ${\cal S}_a$ and ${\cal S}_b$ are invariant under $R$. 
Consequently, if $(\lambda_1(x),\lambda_2(x))$ with $x\in [a,b]$ is a solution of the adjoint system $(\ref{ad-feedback})$, then so is 
$R(\lambda_1(-x),\lambda_2(-x))$ with $x\in [-b,-a]$.
\end{lemma}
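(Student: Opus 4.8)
The plan is to establish the algebraic identity $(\ref{symmetry})$ and the region invariance by direct computation, and then to obtain the final assertion as a formal consequence. First I would record that both branches of the switched adjoint field share the common form
$$
F_i(\lambda_1,\lambda_2)=\left(-a_i\lambda_2-\frac{b_i}{l},\,-\lambda_1\right),
$$
with $(a_1,b_1)=(\bar h+1,\bar h+q)$ and $(a_2,b_2)=(1,q)$, read off from $(\ref{Ham-sys3})-(\ref{Ham-sys4})$ by setting $h=\bar h$ and $h=0$ respectively; note that $R(\lambda_1,\lambda_2)=(-\lambda_1,\lambda_2)$ only flips the first coordinate.

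For $(\ref{symmetry})$ I would compute both sides and compare. On one hand $F_i(R(\lambda_1,\lambda_2))=F_i(-\lambda_1,\lambda_2)=\left(-a_i\lambda_2-b_i/l,\,\lambda_1\right)$, since $\lambda_1$ enters only the second component and does so linearly. On the other hand $-R(F_i(\lambda_1,\lambda_2))=-R\left(-a_i\lambda_2-b_i/l,\,-\lambda_1\right)=\left(-a_i\lambda_2-b_i/l,\,\lambda_1\right)$. The two coincide, proving $F_i\circ R=-R\circ F_i$. The region invariance is immediate: $R$ fixes $\lambda_2$, so it preserves each of the inequalities $\lambda_2>-1/l$ and $\lambda_2<-1/l$ defining $\mathcal{S}_a$ and $\mathcal{S}_b$, whence $R(\mathcal{S}_a)=\mathcal{S}_a$ and $R(\mathcal{S}_b)=\mathcal{S}_b$.

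For the consequence, write $\lambda(x):=(\lambda_1(x),\lambda_2(x))$ and set $\nu(x):=R(\lambda(-x))$ for $x\in[-b,-a]$, which is well defined exactly on that interval since $\lambda$ is defined on $[a,b]$. Using linearity of $R$ and the chain rule, $\nu'(x)=-R\,\lambda'(-x)$. Because $\lambda$ solves $(\ref{ad-feedback})$, at the parameter value $-x$ it satisfies $\lambda'(-x)=F_i(\lambda(-x))$ for the index $i$ selected by the region containing $\lambda(-x)$. Substituting and invoking $(\ref{symmetry})$ in the form $-R\,F_i=F_i\circ R$ gives $\nu'(x)=-R\,F_i(\lambda(-x))=F_i(R(\lambda(-x)))=F_i(\nu(x))$. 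Finally, to confirm that the switched system $(\ref{ad-feedback})$ really prescribes this same branch $F_i$ at the point $\nu(x)$, I use the region invariance: $\nu(x)=R(\lambda(-x))$ lies in $R(\mathcal{S}_i)=\mathcal{S}_i$, the very region that selects $F_i$. Hence $\nu$ solves $(\ref{ad-feedback})$ on $[-b,-a]$.

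The only delicate point -- and the main, though mild, obstacle -- is that $(\ref{ad-feedback})$ is a piecewise-defined switched field rather than a single smooth one, so the active index $i$ can change as an orbit crosses the switching line $\lambda_2=-1/l$. The computation above must therefore be read branch by branch, and it is precisely the region-invariance statement that guarantees the reflected, time-reversed orbit selects the matching branch on each piece. At crossings of $\lambda_2=-1/l$, which occur at isolated parameter values, the identity extends by continuity from either adjacent branch, so no genuine difficulty arises there.
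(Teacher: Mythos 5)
Your proof is correct and follows essentially the same route as the paper: verify the anticommutation identity and the region invariance by direct computation, then apply the chain rule branch by branch (the paper likewise breaks $[a,b]$ into subintervals on which a single field $F_i$ is active) and use the invariance of $\mathcal{S}_a$, $\mathcal{S}_b$ under $R$ to confirm that the reflected, time-reversed orbit selects the matching branch. Your explicit attention to the switching-line crossings is a welcome elaboration of a point the paper leaves implicit, but it is not a different argument.
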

\begin{proof}
The verification of $(\ref{symmetry})$ is straightforward, and so is the invariance of ${\cal S}_a$ and ${\cal S}_b$ under $R$. 
Consider a solution $(\lambda_1(x),\lambda_2(x))$ of the adjoint system $(\ref{ad-feedback})$, defined on some interval $[a,b]$. 
By breaking up $[a,b]$ into smaller subintervals if necessary, it may be assumed without loss of generality that the solution remains 
entirely in one of the (closures of the) regions ${\cal S}_a$ or ${\cal S}_b$, where the system's vector field is given by $F_1$ or $F_2$ 
respectively. By invariance of the regions ${\cal S}_a$ 
and ${\cal S}_b$ under the reflection $R$, the function $({\tilde \lambda_1}(x),{\tilde \lambda_2}(x)):=R(\lambda_1(-x),\lambda_2(-x))$, $x\in [-b,-a]$ lies in the same region as $(\lambda_1(x),\lambda_2(x))$. Finally, it is claimed that this function is also a solution of the 
adjoint system $(\ref{ad-feedback})$. 
Below,  $F_i$ where $i=1$ or $2$, denotes the vector field of the adjoint system that corresponds to the considered solution. 
Using $(\ref{symmetry})$,
\begin{equation*}
\begin{pmatrix}
\frac{d{\tilde \lambda_1}}{dx}(x)\\
\frac{d{\tilde \lambda_1}}{dx}(x)
\end{pmatrix}=
-DR\circ \begin{pmatrix}
\frac{d \lambda_1}{dx}(-x)\\
\frac{d \lambda_2}{dx}(-x)
\end{pmatrix}
=-R\circ F_i
\begin{pmatrix}
\lambda_1(-x)\\
\lambda_2(-x)
\end{pmatrix}
=
F_i\circ R
\begin{pmatrix}
\lambda_1(-x)\\
\lambda_2(-x)
\end{pmatrix}
=
F_i
\begin{pmatrix}
{\tilde \lambda_1}(x)\\
{\tilde \lambda_2}(x)
\end{pmatrix}\textrm{ for } x\in [-b,-a],
\end{equation*}
establishing the claim.
\end{proof}
From Lemma $\ref{sym}$ follows:
\begin{gevolg}\label{halvering}
Let $\lambda(x)$, $x\in [-l/2,l/2]$ be a solution of $(\ref{ad-feedback})$ satisfying $(\ref{TC})$. Then 
\begin{equation}\label{TC2}
\lambda_2(-l/2)=\lambda_1(0)=0
\end{equation}
Conversely, if $\lambda(x)$, $x\in [-l/2,0]$ is a solution of $(\ref{ad-feedback})$ satisfying $(\ref{TC2})$, then
$$
\lambda_e(x):=\begin{cases}
\lambda(x),\textrm{ if } x\in[-l/2,0]\\
R(\lambda(-x)),\textrm{ if }x\in [0,l/2]
\end{cases}
$$
is a solution of $(\ref{ad-feedback})$ satisfying $(\ref{TC})$.
\end{gevolg}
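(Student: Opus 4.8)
The plan is to prove the two implications separately. In $(\ref{TC2})$ the equality $\lambda_2(-l/2)=0$ is just the left half of $(\ref{TC})$, so the real content is $\lambda_1(0)=0$ in the forward direction and, in the converse, that the reflected concatenation $\lambda_e$ genuinely solves $(\ref{ad-feedback})$. Before starting I would record one structural fact that removes all Filippov-type worries: on the switching line ${\cal S}$ one has $F_1=F_2$. Indeed, setting $\lambda_2=-1/l$ in $(\ref{Ham-sys3})$ gives first component $-(h+1)(-1/l)-(h+q)/l=(1-q)/l$ for both $h={\bar h}$ and $h=0$, while the second component is $-\lambda_1$ in either case. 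Hence the right-hand side of $(\ref{ad-feedback})$ is continuous and piecewise affine, so it is globally Lipschitz; initial value problems therefore have unique solutions and the flow is invertible.

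For the converse I would argue by direct gluing, which needs no uniqueness. Given $\lambda$ on $[-l/2,0]$ with $\lambda_2(-l/2)=0$ and $\lambda_1(0)=0$, Lemma $\ref{sym}$ shows that $x\mapsto R(\lambda(-x))$ solves $(\ref{ad-feedback})$ on $[0,l/2]$, so $\lambda_e$ solves the system on each subinterval; the only thing to check is a $C^1$ match at $x=0$. Continuity holds because $\lambda_e(0^-)=\lambda(0)=(0,\lambda_2(0))$ and $\lambda_e(0^+)=R(\lambda(0))=(0,\lambda_2(0))$, which is exactly where $\lambda_1(0)=0$ is used. For the derivatives, the left value is ${\dot \lambda}(0)$ and the right value is $\tfrac{d}{dx}R(\lambda(-x))|_{x=0}=-R\,{\dot \lambda}(0)$; these agree because ${\dot \lambda}_2(0)=-\lambda_1(0)=0$ makes ${\dot \lambda}(0)$ horizontal, and $-R$ fixes horizontal vectors (equivalently, apply $(\ref{symmetry})$ together with $R(\lambda(0))=\lambda(0)$). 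Since the field is continuous across ${\cal S}$, the borderline case $\lambda(0)\in{\cal S}$ requires no separate treatment. Finally $(\ref{TC})$ holds for $\lambda_e$, as $\lambda_{e,2}(-l/2)=\lambda_2(-l/2)=0$ and $\lambda_{e,2}(l/2)=[R(\lambda(-l/2))]_2=\lambda_2(-l/2)=0$.

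For the forward direction I would again use reflection: if $\lambda$ on $[-l/2,l/2]$ satisfies $(\ref{TC})$, then ${\tilde \lambda}(x):=R(\lambda(-x))$ is, by Lemma $\ref{sym}$, another solution, and since $R$ preserves the second coordinate it again satisfies $(\ref{TC})$. If the two-point problem $(\ref{ad-feedback})$--$(\ref{TC})$ has a unique solution, then $\lambda={\tilde \lambda}$, and evaluating at $x=0$ gives $\lambda(0)=R(\lambda(0))$, i.e. $\lambda_1(0)=-\lambda_1(0)=0$. I expect this uniqueness to be the main obstacle: IVP uniqueness (available from the Lipschitz field above) does not by itself force two solutions with the same endpoint data to coincide. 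I would close the gap exactly as in the case $0<q\le1$: parametrize solutions by their starting value $\lambda_1(-l/2)=\lambda_0$ on the $\lambda_1$-axis, and show that the time of first return to that axis is strictly monotone in $\lambda_0$, so that at most one $\lambda_0$ produces return time $l$. The invertibility of the flow is what makes this return map well defined; establishing its monotonicity across the (at most two) switches is the real work.
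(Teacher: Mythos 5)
Your converse direction is correct and is essentially the paper's argument (gluing via Lemma \ref{sym}), and your preliminary observation that $F_1=F_2$ on the switching line $\lambda_2=-1/l$ (both first components equal $(1-q)/l$ there) is a worthwhile addition: it makes the right-hand side of $(\ref{ad-feedback})$ continuous and piecewise affine, which is what legitimizes the appeals to existence and uniqueness of solutions of this autonomous system that both you and the paper rely on.

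The forward direction, however, has a genuine gap. You reduce $\lambda_1(0)=0$ to uniqueness of the two-point boundary value problem $(\ref{ad-feedback})$--$(\ref{TC})$, and then explicitly defer the proof of that uniqueness (monotonicity of the full return map to the $\lambda_1$-axis across up to two switches) as ``the real work.'' That deferred work is not a routine verification --- it is essentially the content of Theorem \ref{time-exp}, whose proof occupies Appendix A; worse, the paper only carries out that monotonicity analysis on the half-interval $[-l/2,0]$ with at most \emph{one} switch, precisely because Corollary \ref{halvering} has already been established. So your route either duplicates the hardest computation in the paper in a strictly harder form, or is circular. The paper avoids BVP uniqueness altogether: it first shows $\lambda_1(-l/2)>0$ and $\lambda_1(l/2)<0$ (a solution starting with $\lambda_1\le 0$ on the $\lambda_1$-axis enters and stays in the interior of the second quadrant, violating $(\ref{TC})$ at $x=l/2$), so by continuity there is some $x^*\in(-l/2,l/2)$ with $\lambda_1(x^*)=0$. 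Since $\lambda(x^*)$ lies on the $\lambda_2$-axis it is fixed by $R$, hence ${\tilde \lambda}(-x^*)=R(\lambda(x^*))=\lambda(x^*)$, and \emph{IVP} uniqueness for the autonomous system forces $\lambda(x)={\tilde \lambda}(x-2x^*)$. Finally a phase-plane invariance argument (solutions launched from the negative $\lambda_1$-axis immediately enter the interior of the second quadrant) rules out $x^*\neq 0$. If you want to complete your proof without importing the paper's trick, this intermediate-zero-plus-time-shift argument is the missing idea; the monotone-return-map route should be abandoned.
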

\begin{proof}
Let $\lambda(x)$, $x\in [-l/2,l/2]$ solve $(\ref{ad-feedback})$ and satisfy $(\ref{TC})$. Then necessarily $\lambda_1(-l/2)>0$, since any solution of $(\ref{ad-feedback})$ starting in an initial condition with $\lambda_1(-l/2)\leq 0$, enters the interior of the second quadrant, and remains there for all future times so that $(\ref{TC})$ cannot hold at $x=l/2$. A similar argument shows that necessarily $\lambda_1(l/2)<0$ as well.  Then by continuity of $\lambda(x)$, it follows that there is some $x^*\in (-l/2,l/2)$, such that:
\begin{equation}\label{hulp}
\lambda_1(x^*)=0
\end{equation}
By Lemma $\ref{sym}$, the function ${\tilde \lambda}(x):=R(\lambda(-x))$, $x\in [-l/2,l/2]$ is also a solution of $(\ref{ad-feedback})$. Notice that 
by $(\ref{hulp})$ and since $R$ leaves the $\lambda_2$-axis invariant,
$$
{\tilde \lambda}(-x^*)=R(\lambda(x^*))=\lambda(x^*)
$$
Therefore, $\lambda(x)$ and ${\tilde \lambda}(x)$ are solutions of the same autonomous system $(\ref{ad-feedback})$ with 
coinciding states at $x=x^*$ and $x=-x^*$ respectively. Hence, by uniqueness of solutions, these solutions must agree up to a shift in $x$:
$$
\lambda(x)={\tilde \lambda} (x-2x^*)
$$
It is claimed that in fact
$$
x^*=0.
$$
If this were not the case then either $x^*>0$ or $x^*<0$. Assume first that $x^*>0$. Then $\lambda(l/2)={\tilde \lambda}(l/2-2x^*)$ belongs to 
the negative $\lambda_1$-axis by $(\ref{TC})$. But since $x^*>0$, and since forward solutions starting on the negative $\lambda_1$-axis enter and remain in the interior of the second quadrant, ${\tilde \lambda}(l/2)={\tilde \lambda}(2x^*+(l/2-2x^*))$ belongs to the interior of the second quadrant. However, 
${\tilde \lambda}(l/2)=R(\lambda (-l/2))$ belongs to the negative $\lambda_1$-axis, since $\lambda(-l/2)$ belongs to the positive $\lambda_1$-axis. Thus, a contradiction has been reached. A similar argument shows that $x^*$ cannot be negative. Thus, $x^*=0$ and 
$$
\lambda(x)={\tilde \lambda}(x),\textrm{ for all } x \in [-l/2,l/2],
$$
and $(\ref{hulp})$ specializes to $\lambda_1(0)=0$, which establishes the second equation in $(\ref{TC2})$.

To prove the converse, note that if $\lambda(x)$, $x$ in $[-l/2,0]$, solves $(\ref{ad-feedback})$, then $R(\lambda(-x))$, $x$ in $[0,l/2]$, 
solves $(\ref{ad-feedback})$ as well by Lemma $\ref{sym}$. Moreover, since $\lambda(0)$ belongs to the $\lambda_2$-axis by $(\ref{TC2})$, 
it equals $R(\lambda (-0))$, and thus $\lambda_e(x)$, $x$ in $[-l/2,l,2]$, is a continuous solution of $(\ref{ad-feedback})$. Finally, since 
$\lambda_e(l/2)=R(\lambda(-l/2))$ belongs to the $\lambda_1$-axis, it satisfies $(\ref{TC})$ at $x=l/2$, which concludes the proof.
\end{proof}
From Corollary $\ref{halvering}$ follows that  solutions of $(\ref{ad-feedback})$ satisfying $(\ref{TC2})$ should be determined. 
A brief sketch to approach this problem is given next. 
Consider all solutions of $(\ref{ad-feedback})$ starting on the positive $\lambda_1$-axis at $x=0$ (rather than at $x=-l/2$; this is easily achieved by a shift in $x$), by parametrizing them by 
the $\lambda_1$-coordinate  of their initial condition. Concerning these solutions, focus on the following two questions:
\begin{enumerate}
\item
Which solutions reach the $\lambda_2$-axis?
\item
For those solutions reaching the $\lambda_2$-axis, what is the first $x$-value larger than $0$ for which this happens.
\end{enumerate}
With respect to the first question, it will be seen that 
only some solutions, namely those with corresponding parameter values that are not too high, reach the $\lambda_2$-axis.
Moreover, some -but not all- of these solutions cross the switching line. With respect to the second question, it will be seen that 
the first value of $x$ for which solutions reach the $\lambda_2$-axis, is an increasing function of the 
parameter. It increases from zero to infinity, and hence there will be a unique solution for which it equals $l/2$. The 
solution corresponding to this $x$-value, is the sought-after solution to problem $(\ref{ad-feedback})$ with $(\ref{TC2})$.

Thus,  consider
\begin{equation}\label{shoot}
{\dot \lambda}=
\begin{cases}
F_1(\lambda),\textrm{ if } \lambda \in {\cal S}_a\\
F_2(\lambda), \textrm{ if } \lambda \in {\cal S}_b
\end{cases}, 
\begin{pmatrix}
\lambda_1(0)\\
\lambda_2(0)
\end{pmatrix}=
\begin{pmatrix}
\lambda_0\\
0
\end{pmatrix}
\end{equation}
where $\lambda_0>0$ is a parameter. To avoid cumbersome notation in the subsequent calculations,  define 
the positive constants
\begin{equation}\label{a-en-b}
a_1={\bar h}+1,\;\; b_1={\bar h}+q\textrm{ and }a_2=1,\;\; b_2=q,
\end{equation}
so that the vector fields $F_i$, $i=1,2$, can be rewritten as
$$
F_i(\lambda_1,\lambda_2)=\begin{pmatrix}
-a_i\lambda_2-\frac{b_i}{l}\\
-\lambda_1
\end{pmatrix}
$$
Also define $\lambda_1$-coordinates of the intercepts of the stable manifolds of the adjoint system $(\ref{Ham-sys3})-(\ref{Ham-sys4})$ with $h(x)={\bar h}$, and with $h(x)=0$ and the $\lambda_1$-axis as
\begin{equation}\label{i1-en-i2}
i_1:=\frac{b_1}{\sqrt{a_1}l}\textrm{ and }i_2:=\frac{b_2}{\sqrt{a_2}l}
\end{equation}
respectively.
Since $q>1$, it can be verified by simple calculations that
\begin{equation}\label{twee-ineq}
e_1:=-\frac{b_1}{a_1l}>-\frac{b_2}{a_2l}=:e_2,\;\; \textrm{ and }\;\; i_{s_1}:=\frac{\sqrt{a_1}}{l}\left(\frac{b_1}{a_1}-1 \right)<\frac{\sqrt{a_2}}{l}\left(\frac{b_2}{a_2}-1 \right)=:i_{s_2}
\end{equation}
The first inequality in $(\ref{twee-ineq})$ expresses that the $\lambda_2$-coordinate of the equilibrium point of the 
adjoint system $(\ref{Ham-sys3})-(\ref{Ham-sys4})$ with $h(x)={\bar h}$ is larger than the  $\lambda_2$-coordinate of the equilibrium point of the adjoint system $(\ref{Ham-sys3})-(\ref{Ham-sys4})$ with $h(x)=0$. The second inequality expresses that the $\lambda_1$-coordinate of the intersection of the switching line where $\lambda_2=-1/l$ and the stable manifold of the equilibrium point of the adjoint system $(\ref{Ham-sys3})-(\ref{Ham-sys4})$ with $h(x)={\bar h}$, is smaller than 
the $\lambda_1$-coordinate of the stable manifold of the equilibrium point of the adjoint system $(\ref{Ham-sys3})-(\ref{Ham-sys4})$ 
with $h(x)=0$. These geometrical observations turn out to be important for the subsequent calculations. They  are illustrated in 
Figure $\ref{adjoint-switch}$.

Define two important values for the parameter $\lambda_0$:
\begin{equation}\label{grenzen}
\lambda_0^*=\frac{\sqrt{2b_1-a_1}}{l}\textrm{ and } \lambda_0^{**}=\left((\lambda_0^*)^2+i_{s,2}^2\right)^{1/2}
\end{equation}
Let $\lambda(x)$, $x>0$, be the (forward) solution of $(\ref{shoot})$, and define
\begin{equation}\label{time-def}
T_0(\lambda_0)=\inf \{x>0|\lambda_1(x)=0\},
\end{equation}
the first instance where $\lambda(x)$ hits the $\lambda_2$-axis. If $\lambda(x)$ never hits the $\lambda_2$-axis, then set $T(\lambda_0)=+\infty$. Since the system is piecewise linear, it can be solved  analytically, and calculate $T_0$ explicitly:
\begin{stel}\label{time-exp}
The function $T_0:(0,+\infty)\rightarrow (0,+\infty]$ is given by:
$$
T_0(\lambda_0)=
\begin{cases}
\frac{1}{\sqrt{a_1}}\arctanh \left(\frac{\lambda_0}{i_1} \right),\textrm{ if  } 0<\lambda_0<\lambda_0^*\\
\frac{1}{\sqrt{a_1}}\left(\arctanh\left(\frac{\lambda_0}{i_1}\right)-\arccosh \left( 
\frac{b_1/a_1-1}{b_1/a_1}\frac{1}{\sqrt{1-\left(\frac{\lambda_0}{i_1}\right)^2}}\right) \right)+
\frac{1}{\sqrt{a_2}}\arctanh \left(\frac{\sqrt{\lambda_0^2-(\lambda_0^*)^2}}{i_{s2}} \right),\textrm{ if }\lambda_0^*\leq \lambda_0<i_1\\
-\frac{1}{\sqrt{a_1}}\ln \left( \frac{b_1/a_1-1}{b_1/a_1}\right)+
\frac{1}{\sqrt{a_2}}\arctanh \left(\frac{\sqrt{\lambda_0^2-(\lambda_0^*)^2}}{i_{s2}} \right),
\textrm{ if }\lambda_0=i_1\\
\frac{1}{\sqrt{a_1}}\left(\arccoth\left(\frac{\lambda_0}{i_1}\right)-\arcsinh \left( 
\frac{b_1/a_1-1}{b_1/a_1}\frac{1}{\sqrt{\left(\frac{\lambda_0}{i_1}\right)^2-1}}\right) \right)+
\frac{1}{\sqrt{a_2}}\arctanh \left(\frac{\sqrt{\lambda_0^2-(\lambda_0^*)^2}}{i_{s2}} \right),
\textrm{ if } i_1<\lambda_0<\lambda_0^{**}\\
+\infty,\textrm{ if  }\lambda_0^{**}\leq \lambda_0
\end{cases}
$$
$T_0$ is continuous and increasing, and 
\begin{equation}\label{lims}
\lim_{\lambda_0\rightarrow 0}T_0(\lambda_0)=0\textrm{ and }\lim_{\lambda_0\rightarrow \lambda_0^{**}}T_0(\lambda_0)=+\infty
\end{equation}
There exists a unique ${\bar \lambda_0} \in (0,\lambda_0^{**})$ such that
\begin{equation}\label{lambda-bar}
T_0({\bar \lambda_0})=\frac{l}{2}
\end{equation}
with corresponding solution $\lambda(x)$ of $(\ref{shoot})$, satisfying
\begin{equation}\label{TC3}
\lambda_2(0)=\lambda_1(l/2)=0
\end{equation}
This solution hits the switching line where $\lambda_2=-1/l$ for some $x\in (0,l/2)$ if and only if 
$$
\lambda_0^*<{\bar \lambda_0},
$$
or equivalently, by applying the increasing function $T_0$, if and only if
\begin{equation}\label{criterion}
\frac{1}{\sqrt{a_1}}\arctanh \left(\frac{\lambda_0^*}{i_1} \right)<\frac{l}{2}
\end{equation}
\end{stel}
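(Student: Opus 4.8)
The plan is to exploit that $(\ref{shoot})$ is piecewise linear: on each of the two half-planes ${\cal S}_a$ and ${\cal S}_b$ the flow is that of an affine saddle (Lemma $\ref{saddle}$), hence explicitly solvable, and — crucially — each linear piece carries a conserved quantity whose level sets are exactly the hyperbolic orbits. I would first write the $F_1$-flow issued from $(\lambda_0,0)$. In coordinates centered at the equilibrium $E_1=(0,e_1)$, namely $\mu_1=\lambda_1$, $\mu_2=\lambda_2-e_1$, one has $\dot\mu_1=-a_1\mu_2$, $\dot\mu_2=-\mu_1$, with conserved quantity $\mu_1^2-a_1\mu_2^2$, equal along the chosen orbit to $\lambda_0^2-i_1^2$. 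Since $\dot\lambda_2=-\lambda_1<0$ while $\lambda_1>0$, the ordinate descends monotonically, so there is at most one crossing of the switching line $\lambda_2=-1/l$. Evaluating the conserved quantity on that line (where $\mu_2=(b_1/a_1-1)/l$) and using the algebraic identity $i_1^2-i_{s_1}^2=(2b_1-a_1)/l^2=(\lambda_0^*)^2$ shows that the $F_1$-orbit meets the switching line if and only if $\lambda_0\ge\lambda_0^*$, and that at the crossing $\lambda_1=\sqrt{\lambda_0^2-(\lambda_0^*)^2}$.

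I would then compute first-passage times by integrating $\mu_1(x)=Ae^{\sqrt{a_1}x}+Be^{-\sqrt{a_1}x}$, where $A+B=\lambda_0$ and $A-B=-i_1$. For $0<\lambda_0<\lambda_0^*$ the orbit never reaches the switching line and attains $\lambda_1=0$ at the vertex of its hyperbola; setting $\mu_1=0$ gives $T_0=\frac{1}{\sqrt{a_1}}\arctanh(\lambda_0/i_1)$, the first case. For $\lambda_0\ge\lambda_0^*$ I would use additivity of time along the orbit, splitting $T_0$ into the $F_1$-time from $(\lambda_0,0)$ to the crossing plus the $F_2$-time from the crossing to the $\lambda_2$-axis. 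The same hyperbolic calculation applied to $F_2$ (centered at $E_2=(0,e_2)$, with entry ordinate $\mu_2=i_{s_2}$ and conserved value $\lambda_0^2-(\lambda_0^{**})^2$) yields the $F_2$-contribution $\frac{1}{\sqrt{a_2}}\arctanh\!\big(\sqrt{\lambda_0^2-(\lambda_0^*)^2}/i_{s_2}\big)$, finite precisely when $\lambda_0<\lambda_0^{**}$ (conserved value negative) and $+\infty$ once $\lambda_0\ge\lambda_0^{**}$, since then the $F_2$-orbit opens in the $\lambda_1$-direction and $\lambda_1$ does not return to $0$ in finite time. The three middle cases come from the sign of the $F_1$-conserved quantity $\lambda_0^2-i_1^2$: negative (orbit strictly between the manifolds, giving $\arctanh$ and $\arccosh$), zero (orbit on the stable manifold, giving the logarithm), positive (orbit outside, giving $\arccoth$ and $\arcsinh$); the $\arccosh$ argument is just $i_{s_1}/\sqrt{i_1^2-\lambda_0^2}$ rewritten via $i_{s_1}/i_1=(b_1/a_1-1)/(b_1/a_1)$.

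With the piecewise formula in hand I would verify continuity at the break points. At $\lambda_0^*$ the crossing coincides with the $\lambda_2$-axis, so the $F_2$-term vanishes and the $\arccosh$ argument equals $1$, collapsing the formula to the first case. At $\lambda_0=i_1$ both the $\arctanh$ and $\arccosh$ terms diverge, and a short asymptotic expansion ($\arctanh s\sim\frac12\ln\frac{2}{1-s}$, $\arccosh y\sim\ln 2y$) shows their difference tends to $-\frac{1}{\sqrt{a_1}}\ln\frac{b_1/a_1-1}{b_1/a_1}$, matching the logarithmic case (the $F_2$-term being continuous there). For monotonicity I would check $T_0'>0$ on each open branch and invoke continuity at the junctions to conclude that $T_0$ is strictly increasing on all of $(0,\lambda_0^{**})$; a cleaner alternative, bypassing the awkward derivatives, is the geometric observation that distinct orbits of the autonomous system $(\ref{shoot})$ cannot cross, so a larger starting abscissa $\lambda_0$ forces a later hitting time. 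The limits in $(\ref{lims})$ are immediate from the first and last cases, so the intermediate value theorem yields a unique $\bar\lambda_0\in(0,\lambda_0^{**})$ with $T_0(\bar\lambda_0)=l/2$; the corresponding orbit satisfies $(\ref{TC3})$ by construction, starting on the $\lambda_1$-axis at $x=0$ and meeting the $\lambda_2$-axis exactly at $x=l/2$. Finally, the $F_1$-orbit crosses the switching line in the open interval precisely when $\lambda_0>\lambda_0^*$, so applying the strictly increasing $T_0$ converts $\lambda_0^*<\bar\lambda_0$ into $T_0(\lambda_0^*)<T_0(\bar\lambda_0)=l/2$, which is exactly $(\ref{criterion})$.

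The main obstacle will be the multi-case bookkeeping of the explicit integration, and in particular the continuity check at $\lambda_0=i_1$, where two logarithmically divergent quantities must be shown to cancel to the finite logarithmic value; establishing \emph{global} rather than merely piecewise monotonicity across the case boundaries is the other delicate point, for which the orbit-nesting argument is the safer route since it avoids differentiating the $\arccosh$ and $\arcsinh$ branches.
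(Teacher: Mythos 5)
Your derivation of the piecewise formula follows essentially the same route as the paper's Appendix A: both use the first integrals of the two affine saddles to locate the crossing of the switching line (giving the crossing abscissa $\sqrt{\lambda_0^2-(\lambda_0^*)^2}$ and the thresholds $\lambda_0^*$ and $\lambda_0^{**}$), solve each linear piece explicitly, and add the two passage times. Your continuity checks at $\lambda_0=\lambda_0^*$ and at $\lambda_0=i_1$ (the cancellation of the two logarithmic divergences) are correct and consistent with the stated formula, as is the identification of the $\arccosh$ argument with $i_{s_1}/\sqrt{i_1^2-\lambda_0^2}$.

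The genuine gap is the monotonicity of $T_0$ on $(\lambda_0^*,\lambda_0^{**})$, which is the real content of the theorem and occupies most of the paper's proof. The ``cleaner alternative'' you designate as the safer route --- that non-crossing of orbits of the autonomous system forces a later hitting time for a larger $\lambda_0$ --- is a non sequitur: non-intersection constrains the orbits as curves, not their time parametrization. Along the outer orbit $\lambda_1$ is pointwise larger, so the vertical speed $|\dot\lambda_2|=\lambda_1$ is larger, but the outer orbit must also descend to a strictly lower $\lambda_2$ before returning to the $\lambda_1$-axis; writing the passage time as $\int(-d\lambda_2)/\lambda_1(\lambda_2)$ shows the two effects compete, and nesting alone decides nothing (for a linear center, nested orbits have identical transit times between two rays). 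A further obstruction is that two solutions with different $\lambda_0$ cross the switching line at different times, so on the intervening interval they obey different vector fields and no simple comparison of their difference survives. That leaves your first option, differentiating each branch, which is exactly what the paper does --- but it is far from routine: on $(\lambda_0^*,i_1)$ the derivative is a difference of terms that individually blow up as $\lambda_0\to i_1$, and the paper must reduce $dT_0/d\lambda_0$ to a positive factor times an auxiliary function $f(\lambda_0)$, then show $f$ is increasing with $f(\lambda_0^*)=0$, to conclude positivity. Until that computation (or a correct soft substitute) is supplied, the strict monotonicity, hence the uniqueness of ${\bar \lambda_0}$ and the equivalence of $\lambda_0^*<{\bar \lambda_0}$ with $(\ref{criterion})$, are not established.
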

\begin{proof}
Theorem $\ref{time-exp}$ is proved in the Appendix A.
\end{proof}
Inequality $(\ref{criterion})$ is of great practical relevance. It determines the minimal coastal length 
\begin{equation}\label{min-length}
l_{\min}=\frac{2}{\sqrt{a_1}}\arctanh \left(\frac{\lambda_0^*}{i_1} \right)=\frac{2}{\sqrt{{\bar h}+1}}\arctanh 
\left(\frac{\sqrt{({\bar h}+1)({\bar h}+2q-1)}}{{\bar h}+q} \right)
\end{equation}
that is required in order for the implementation of an MPA to be optimal. 
For coastlines with a length below $l_{\min}$, MPA's should not be installed, and fishing should take place with maximal harvesting rate 
${\bar h}$ everywhere along the coast. For coastlines longer than $l_{\min}$, the optimal solution requires the placement of an MPA. Where this should occur is addressed in the next result
\begin{stel}\label{location}
Assume that $l>l_{\min}$, and consider the unique value ${\bar \lambda_0}\in (\lambda_0^*,\lambda_0^{**})$ defined in $(\ref{lambda-bar})$. 
Denote the corresponding solution of $(\ref{shoot})$ by $\lambda(x)$, and let $T_s({\bar \lambda_0})$ be the $x$-value at which the 
solution $\lambda(x)$ hits the switching line where $\lambda_2=-1/l$. Then
\begin{equation}\label{where}
T_s({\bar \lambda_0})=
\begin{cases}
\frac{1}{\sqrt{a_1}}\left( \arctanh \left(\frac{{\bar \lambda_0}}{i_1} \right) -
\arccosh \left(\frac{b_1/a_1-1}{b_1/a_1}\frac{1}{\sqrt{1-\left(\frac{{\bar \lambda_0}}{i_1} \right)^2}} \right)
\right)\textrm{ if } \lambda_0^*<{\bar \lambda_0}<i_1\\
-\frac{1}{\sqrt{a_1}}\ln \left(\frac{b_1/a_1-1}{b_1/a_1} \right)\textrm{ if } {\bar \lambda_0}=i_1\\
\frac{1}{\sqrt{a_1}}\left( \arccoth \left(\frac{{\bar \lambda_0}}{i_1} \right)
-\arcsinh \left(\frac{b_1/a_1-1}{b_1/a_1}\frac{1}{\sqrt{\left(\frac{{\bar \lambda_0}}{i_1} \right)^2-1}} \right)
\right)\textrm{ if } i_1<{\bar \lambda_0}<\lambda_0^{**}
\end{cases}
\end{equation}
\end{stel}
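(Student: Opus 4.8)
The plan is to exploit that $T_s(\bar\lambda_0)$ depends only on the first, pre-switch phase of the trajectory, during which the dynamics are linear. Since $l>l_{\min}$, the criterion (\ref{criterion}) of Theorem \ref{time-exp} together with the monotonicity of $T_0$ gives $\lambda_0^*<\bar\lambda_0<\lambda_0^{**}$, so by the final assertion of that theorem the solution $\lambda(x)$ of (\ref{shoot}) with $\lambda(0)=(\bar\lambda_0,0)$ does reach the switching line $\lambda_2=-1/l$ at a first time $T_s(\bar\lambda_0)\in(0,l/2)$. On the interval $[0,T_s(\bar\lambda_0))$ the solution stays in $\mathcal{S}_a$, where it coincides with the flow of the linear field $F_1$ (the case $h=\bar h$). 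Thus the problem reduces to: \emph{starting the linear saddle $F_1$ at $(\bar\lambda_0,0)$, find the first $x$ at which $\lambda_2=-1/l$.}

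First I would linearize about the equilibrium $E_1=(0,e_1)$ of $F_1$ (Lemma \ref{saddle}), setting $\mu_1=\lambda_1$, $\mu_2=\lambda_2-e_1$, then rescale time as $\tau=\sqrt{a_1}\,x$ and put $\nu=\sqrt{a_1}\,\mu_2$. This turns $F_1$ into the symmetric saddle $d\mu_1/d\tau=-\nu$, $d\nu/d\tau=-\mu_1$, whose flow is $\mu_1(\tau)=\bar\lambda_0\cosh\tau-i_1\sinh\tau$ and $\nu(\tau)=i_1\cosh\tau-\bar\lambda_0\sinh\tau$, using $\mu_1(0)=\bar\lambda_0$ and $\nu(0)=i_1$. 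The quantity $\mu_1^2-\nu^2=\bar\lambda_0^2-i_1^2$ is conserved, and the switching line $\lambda_2=-1/l$ becomes $\nu=i_{s_1}$. Hence $T_s=\tau_s/\sqrt{a_1}$, where $\tau_s$ is the first root of
\begin{equation*}
i_1\cosh\tau-\bar\lambda_0\sinh\tau=i_{s_1}.
\end{equation*}

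The three cases in (\ref{where}) are exactly the sign cases of $\bar\lambda_0^2-i_1^2$, i.e.\ of $\bar\lambda_0$ versus $i_1$. For $\bar\lambda_0<i_1$ I would write $i_1=B\cosh\psi_0$, $\bar\lambda_0=B\sinh\psi_0$ with $B=\sqrt{i_1^2-\bar\lambda_0^2}$ and $\psi_0=\arctanh(\bar\lambda_0/i_1)$, so that $\nu(\tau)=B\cosh(\tau-\psi_0)$ and $\tau_s=\psi_0-\arccosh(i_{s_1}/B)$; for $\bar\lambda_0>i_1$ I would instead set $\bar\lambda_0=A\cosh\phi_0$, $i_1=A\sinh\phi_0$ with $A=\sqrt{\bar\lambda_0^2-i_1^2}$ and $\phi_0=\arccoth(\bar\lambda_0/i_1)$, giving $i_1\cosh\tau-\bar\lambda_0\sinh\tau=-A\sinh(\tau-\phi_0)$ and $\tau_s=\phi_0-\arcsinh(i_{s_1}/A)$; the borderline $\bar\lambda_0=i_1$ collapses both branches to $\nu(\tau)=i_1e^{-\tau}$, yielding $\tau_s=\ln(i_1/i_{s_1})$. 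Converting back through $T_s=\tau_s/\sqrt{a_1}$ and using the two elementary identities $i_1^2-i_{s_1}^2=(\lambda_0^*)^2$ and $i_{s_1}/i_1=(b_1/a_1-1)/(b_1/a_1)$, both immediate from (\ref{i1-en-i2}), (\ref{twee-ineq}) and (\ref{grenzen}), reproduces the three formulas in (\ref{where}); the same identities give $\lambda_0^*<i_1<\lambda_0^{**}$, so the three cases exhaust $(\lambda_0^*,\lambda_0^{**})$.

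The step I expect to require the most care is selecting the \emph{first} crossing and the correct branch of $\arccosh$ when $\bar\lambda_0<i_1$: there $\nu(\tau)$ first decreases to its minimum $B$ at $\tau=\psi_0$ and then increases, so $\cosh(\tau-\psi_0)=i_{s_1}/B$ has two roots $\tau=\psi_0\pm\arccosh(i_{s_1}/B)$, and one must argue that the relevant crossing is the minus branch and that it is positive. The hypothesis $\bar\lambda_0>\lambda_0^*$, combined with $i_1^2-i_{s_1}^2=(\lambda_0^*)^2$ (which forces $B<i_{s_1}$, so the minimum of $\nu$ lies below the switching level and a descending crossing exists), is precisely what legitimizes this selection and guarantees $0<T_s(\bar\lambda_0)<l/2$. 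Finally, I would note that this is no coincidence: the first summand in each branch of $T_0(\lambda_0)$ in Theorem \ref{time-exp} is exactly the duration of this same $F_1$-phase, so $T_s(\bar\lambda_0)$ can alternatively be read off directly as that first summand evaluated at $\lambda_0=\bar\lambda_0$.
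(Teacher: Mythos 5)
Your proposal is correct and follows essentially the same route as the paper: the paper's proof simply observes that $T_s({\bar \lambda_0})=x_s({\bar \lambda_0})$, where $x_s(\lambda_0)$ in $(\ref{switch-time})$ is obtained in Appendix A by solving $\cosh(\sqrt{a_1}x_s)-\tfrac{\lambda_0}{i_1}\sinh(\sqrt{a_1}x_s)=\tfrac{b_1/a_1-1}{b_1/a_1}$, which is exactly the equation $i_1\cosh\tau-{\bar \lambda_0}\sinh\tau=i_{s_1}$ you derive after centering at the equilibrium and rescaling. Your closing remark --- that $T_s({\bar \lambda_0})$ is the first summand of $T_0$ evaluated at ${\bar \lambda_0}$ --- is precisely the paper's one-line argument, and your careful justification of the minus branch of $\arccosh$ via $B<i_{s_1}$ is a welcome addition the paper leaves implicit.
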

Theorem $\ref{location}$ is proved in the Appendix B.

Combining Theorems $\ref{time-exp}$ and $\ref{location}$ yields the main result.
\begin{stel} \label{q>1}
Assume that $q> 1$. 
\begin{itemize}
\item
If $l\leq l_{\min}$,  then 
there is a unique optimal control $h^*(x)={\bar h}$ for all $x$ in $[-l/2,l/2]$ which maximizes the scaled functional $(\ref{scaled-cost})$ for the 
steady state problem $(\ref{ss1})-(\ref{BC-state})$.  The corresponding optimal fish density $u(x)=u^*(x)$ is given by $(\ref{opl})-(\ref{alfa})$ with $v_0=v_0^*$, where $v_0^*$ is defined in $(\ref{u-sol})$.
\item
If $l>l_{\min}$, then there is a unique optimal control
$$
h^*(x)=\begin{cases}
{\bar h}\textrm{ if } x\in \left[-l/2,-l/2+T_s({\bar \lambda_0})\right)\textrm{ or }x\in \left(l/2-T_s({\bar \lambda_0}),l/2\right]\\
0,\textrm{ if }x\in [-l/2+T_s({\bar \lambda_0}),l/2-T_s({\bar \lambda_0})]\\
\end{cases}
$$
where $T_s({\bar \lambda_0})$ is defined in $(\ref{where})$. This optimal control maximizes the scaled functional $(\ref{scaled-cost})$ for the 
steady state problem $(\ref{ss1})-(\ref{BC-state})$.  There is a corresponding optimal fish density $u^*(x)$, defined as 
the $u$-component of the unique solution to $(\ref{ss1})-(\ref{BC-state})$, with $h(x)=h^*(x)$.
\end{itemize}
\end{stel}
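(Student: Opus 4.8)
The plan is to assemble the pieces established earlier into a single existence--uniqueness argument. By Theorem $\ref{existence}$ an optimal control exists, so it suffices to show that the necessary conditions of the maximum principle pin down a \emph{unique} candidate of the stated form; any optimizer must then coincide with it. By Pontryagin's principle, an optimal control $h^*$ together with its state $(u^*,v^*)$ and adjoint $(\lambda_1^*,\lambda_2^*)$ solves the Hamiltonian system $(\ref{Ham-sys1})$--$(\ref{Ham-sys4})$ with $(\ref{BC})$ and $(\ref{TC})$, and --- since singular arcs were ruled out and $u^*>0$ on the interior by $(\ref{u-pos})$ --- the control is bang--bang of the form $(\ref{optimal2})$: $h^*(x)={\bar h}$ where $\lambda_2^*(x)>-1/l$ and $h^*(x)=0$ where $\lambda_2^*(x)<-1/l$. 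Thus $h^*$ is completely determined by the adjoint trajectory, and the whole problem reduces to solving the autonomous feedback system $(\ref{ad-feedback})$ subject to $(\ref{TC})$.

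First I would invoke Corollary $\ref{halvering}$ to replace $(\ref{TC})$ by the half--horizon condition $(\ref{TC2})$, and then shift $x$ by $l/2$ so that the trajectory starts on the positive $\lambda_1$--axis at $x=0$; this is exactly the shooting problem $(\ref{shoot})$ analysed in Theorem $\ref{time-exp}$. That theorem provides a unique ${\bar\lambda_0}\in(0,\lambda_0^{**})$ with $T_0({\bar\lambda_0})=l/2$, and hence --- after extending by the reflection $R$ as in Corollary $\ref{halvering}$ --- a unique adjoint trajectory on $[-l/2,l/2]$ meeting $(\ref{TC})$. Consequently $(\lambda_1^*,\lambda_2^*)$, and therefore $h^*$, is unique.

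It remains to read off the switching structure in each regime. For $l\le l_{\min}$, the criterion $(\ref{criterion})$--$(\ref{min-length})$ gives ${\bar\lambda_0}\le\lambda_0^*$, so by Theorem $\ref{time-exp}$ the adjoint trajectory reaches the $\lambda_2$--axis without ever crossing the switching line; hence $\lambda_2^*(x)>-1/l$ on $(-l/2,l/2)$ and $h^*\equiv{\bar h}$. For $l>l_{\min}$ one has ${\bar\lambda_0}>\lambda_0^*$, so the trajectory does cross, at shooting--time $T_s({\bar\lambda_0})$ computed in Theorem $\ref{location}$; undoing the shift places the crossing at $x=-l/2+T_s({\bar\lambda_0})$, and the reflection symmetry of Corollary $\ref{halvering}$ produces the mirror crossing at $x=l/2-T_s({\bar\lambda_0})$. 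Since $\lambda_2^*>-1/l$ outside the central interval and $\lambda_2^*<-1/l$ inside it, $(\ref{optimal2})$ yields precisely the claimed form: $h^*={\bar h}$ on the two boundary intervals and $h^*=0$ on $[-l/2+T_s({\bar\lambda_0}),\,l/2-T_s({\bar\lambda_0})]$, a single Marine Reserve. For the associated state I would solve the linear Dirichlet problem $(\ref{ss1})$--$(\ref{BC-state})$ with this piecewise--constant $h^*$: because the zero--order coefficient $1+h^*(x)\ge 1>0$, the homogeneous problem has only the trivial solution, the inhomogeneous problem is uniquely solvable, and positivity of $u^*$ follows as in Remark $\ref{pos-constraint}$; in the no--reserve regime this is just $(\ref{opl})$--$(\ref{alfa})$ with $v_0=v_0^*$ as in $(\ref{u-sol})$, matching Theorem $\ref{q-less-than-1}$.

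The main obstacle is not any single computation but the logical closure of the argument: the maximum principle furnishes only \emph{necessary} conditions, so I must combine the existence guaranteed by Theorem $\ref{existence}$ with the uniqueness of the candidate just constructed to conclude that this candidate \emph{is} the (unique) optimizer. The one delicate bookkeeping step is the translation between the shooting problem's quantities --- ${\bar\lambda_0}$ and $T_s({\bar\lambda_0})$, defined for a trajectory launched at $x=0$ --- and their meaning on the true horizon $[-l/2,l/2]$, which requires tracking the $x$--shift together with the reflection $R$ of Corollary $\ref{halvering}$ so that the two symmetric switches land at $\mp(l/2-T_s({\bar\lambda_0}))$ as stated.
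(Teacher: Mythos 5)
Your proposal is correct and follows the paper's route almost exactly: existence from Theorem \ref{existence}, exclusion of singular arcs and the bang--bang form \eqref{optimal2}, reduction to the half--horizon shooting problem via Corollary \ref{halvering}, and the dichotomy ${\bar \lambda_0}\le\lambda_0^*$ versus ${\bar \lambda_0}>\lambda_0^*$ from Theorems \ref{time-exp} and \ref{location}, including the correct bookkeeping that places the two switches at $\mp\bigl(l/2-T_s({\bar \lambda_0})\bigr)$. The one place where you genuinely diverge is the uniqueness of the state corresponding to $h^*$: the paper argues that \eqref{ss1}--\eqref{ss2} with $h=h^*$ is a strongly monotone (cooperative, irreducible) planar system, so two distinct admissible initial slopes at $x=-l/2$ would produce strictly ordered trajectories, contradicting $u_1(l/2)=u_2(l/2)=0$; you instead observe that the homogeneous Dirichlet problem $u''-(1+h^*)u=0$, $u(\pm l/2)=0$ has only the trivial solution because the zero--order coefficient is positive (an energy/integration-by-parts argument), whence the linear BVP is uniquely solvable. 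Both arguments are valid; yours is the more standard ODE/Sturm--Liouville route and generalizes immediately to higher dimensions and nonconstant coefficients, while the paper's monotonicity argument is tailored to the planar first-order formulation and fits the phase-plane machinery used throughout. Your explicit emphasis that the maximum principle is only necessary, so that existence plus uniqueness of the candidate must be combined to close the argument, is the correct logical structure and is only implicit in the paper.
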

\begin{proof}
If $l\leq l_{\min}$, then it follows from Theorem $\ref{time-exp}$ that the optimal control never exhibits a switch during the 
control horizon $[-l/2,l/2]$, and thus 
$h^*(x)={\bar h}$ for all $x\in [-l/2,l/2]$. The corresponding optimal fish density then follows as in the case where $0<q\leq 1$, see 
Theorem $\ref{q-less-than-1}$.

Now assume that $l>l_{\min}$. The form of an optimal control follows from Theorem $\ref{time-exp}$ for $x\in [-l/2,0]$, and by the symmetry 
of the adjoint system (see Corollary $\ref{halvering}$) for $x\in [0,l/2]$. What remains to be proved is that the steady state 
problem $(\ref{ss1})-(\ref{BC-state})$ with $h(x)=h^*(x)$ has a unique solution.
First, notice that system $(\ref{ss1})-(\ref{ss2})$ with $h(x)=h^*(x)$ is a strongly monotone system \cite{smith} because the Jacobian matrix
$$
\begin{pmatrix}
0&1\\
1+h^*(x)&0
\end{pmatrix}
$$
has positive off-diagonal entries, and because it is an irreducible matrix for all $x\in [-l/2,l/2]$.
It is claimed that this implies that $(\ref{ss1})-(\ref{BC-state})$ with $h(x)=h^*(x)$, has a unique solution. Indeed, assume that 
$(0,v_1(-l/2))$ and $(0,v_2(-l/2))$ were two distinct initial conditions at $x=-l/2$ whose corresponding solutions $(u_1(x),v_1(x))$ 
and $(u_2(x),v_2(x))$ would solve  $(\ref{ss1})-(\ref{BC-state})$, with $h(x)=h^*(x)$. Assume without loss of generality that $v_1<v_2$ (otherwise, simply relabel the initial conditions). By strong monotonicity, there holds that:
$$
\begin{pmatrix}
u_1(x)\\
v_1(x)
\end{pmatrix}<<
\begin{pmatrix}
u_2(x)\\
v_2(x)
\end{pmatrix}
$$
for all $x\in (-l/2,l/2]$. Here $<<$ means that both components of the two vectors are strictly ordered according to the usual strict order $<$ 
on the real numbers.
This contradicts in particular that $u_1(l/2)=u_2(l/2)=0$.
\end{proof}
\begin{opm}
Theorems $\ref{q-less-than-1}$ and $\ref{q>1}$ establish the optimal controls for the steady state problem associated to 
system $(\ref{scaled})$ with scaled cost $(\ref{scaled-cost})$. A natural question is whether the steady state corresponding 
to this optimal control $h^*(x)$ is asymptotically stable for the time problem $(\ref{scaled})$. Here we briefly show that the 
eigenvalues of the linearization of system $(\ref{scaled})$ at the steady state $u^*(x)$ corresponding to $h^*(x)$, are always negative, 
providing evidence of asymptotic stability. Linearization leads to the following eigenvalue problem:
\begin{eqnarray}
\lambda w&=&w_{xx}-(1+h^*(x))w\nonumber \\
w(-l/2)&=&w(l/2)=0 \label{eig-problem}
\end{eqnarray}
An integration by parts shows that the operator $L[w]:=w_{xx}-(1+h^*(x))w$ is self-adjoint for 
the continuously differentiable functions on $[-l/2,l/2]$ that satisfy the 
Dirichlet boundary condition at $x=\pm l/2$, i.e. $(L[w_1],w_2)=(w_1,L[w_2])$, where $(w_1,w_2):=\int_{-l/2}^{l/2}w_1w_2dx$ denotes the 
inner product on this space. Consequently, all eigenvalues $\lambda$ are real. If $(\lambda,w(x))$ is an eigenvalue-eigenfunction pair, 
i.e. if $\lambda$ is a real number and $w(x)$ a corresponding eigenfunction, i.e. a non-zero solution of $(\ref{eig-problem})$, then multiplying the equation in 
$(\ref{eig-problem})$ by $w$, integrating by parts over $[-l/2,l/2]$, and using the boundary condition, yields:
$$
\lambda \int_{-l/2}^{l/2}w^2dx=-\int_{-l/2}^{l/2}w_x^2+(1+h^*(x))w^2dx,
$$
from which follows that $\lambda<0$, as claimed.
\end{opm}
\section{Discussion}

In this paper  the problem of when and where to implement an MPA or network of MPA's along a fixed coastline was considered. 
Specifically, the question of maximizing the functional $(\ref{unscaled-cost})$ for the steady state corresponding to 
problem $(\ref{unscaled})$, over all harvesting rate functions $H(X)$ taking values in $[0,{\bar H}]$, was considered. It was found that the optimal solution 
had a particularly simple structure: either no, or just a single Marine Reserve where fishing is prohibited, should be established. 
\begin{table}
\centering
\begin{tabular}{|c|c|c|}
\hline
${\bf L\backslash Q}$ & $Q\leq \mu$   & $Q>\mu$      \\
\hline
$L\leq L_{\min}$    & no MPA      & no MPA   \\
\hline
$L> L_{\min}$    & no MPA      & $1$ MPA   \\
\hline
\end{tabular}
\caption{Summary of main result}
\label{tabel}
\end{table}
The main results -which are succinctly summarized in Table $\ref{tabel}$, and where $L_{\min}$ is defined below- were proved for a scaled model, yet they are easily phrased in terms of the parameters of the original, unscaled model:
\begin{enumerate}
\item If $Q\leq \mu$, then the optimal harvesting strategy is to fish at maximal rate ${\bar H}$ everywhere along the coastline. 
No MPA should be installed in this case.
\item If $Q>\mu$, then the optimal harvesting strategy depends on the length of the coastline. Letting
\begin{equation}\label{unscaled-length}
L_{\min}=
2\sqrt{\frac{D}{{\bar H}+\mu}}\arctanh 
\left(\frac{\sqrt{({\bar H}+\mu)({\bar H}+2Q-\mu)}}{{\bar H}+Q} \right),
\end{equation}
there holds that:
\begin{enumerate}
\item if $L\leq L_{\min}$, then  the optimal harvesting strategy is still to fish at maximal rate ${\bar H}$ everywhere; no MPA should be implemented in this case either.
\item if $L>L_{\min}$, then  the optimal harvesting strategy requires the installation of a single MPA in the middle of the 
coastline; the MPA takes the form of a Marine Reserve where fishing is prohibited. Outside the reserve, fishing is allowed, and occurs at maximal rate ${\bar H}$: 
$$
H(X)=\begin{cases}
{\bar H}\textrm{ if } X\in [-L/2,-B) \textrm{ or } X\in (B,L/2]\\
0\textrm{ if } X\in[-B,B]
\end{cases}
$$
The Marine Reserve boundary $B$ is determined as follows:
\begin{eqnarray*}
B=\begin{cases}
\frac{L}{2}-\sqrt{\frac{D}{{\bar H}+\mu}}\left( \arctanh(\lambda^*)-
\arccosh\left(\frac{Q-\mu}{Q+{\bar H}}\frac{1}{\sqrt{1-(\lambda^*)^2}} \right)\right),\textrm{ if }
\frac{\sqrt{({\bar H}+2Q-\mu)({\bar H}+\mu)}}{{\bar H}+Q}<\lambda^*<1\\
\frac{L}{2}+\sqrt{\frac{D}{{\bar H}+\mu}}\ln \left( \frac{Q-\mu}{Q+{\bar H}}\right),\textrm{ if } \lambda^*=1\\
\frac{L}{2}-\sqrt{\frac{D}{{\bar H}+\mu}}\left(\arccoth(\lambda^*)-\arcsinh \left(\frac{Q-\mu}{Q+{\bar H}}
\frac{1}{\sqrt{(\lambda^*)^2-1}} \right)\right),\textrm{ if } 1<\lambda^*<\frac{\sqrt{({\bar H}+Q^2/\mu)({\bar H}+\mu)}}{{\bar H}+Q}
\end{cases}
\end{eqnarray*}
where $\lambda^*$ is the unique solution of the equation
\begin{equation}\label{inversie}
F(\lambda^*)=\frac{L}{2},
\end{equation}
and the increasing function $F$ is defined as
\end{enumerate}
\end{enumerate}
$$
F(\lambda)=
\begin{cases}
\sqrt{\frac{D}{{\bar H}+\mu}}\left( \arctanh(\lambda)-
\arccosh\left(\frac{Q-\mu}{Q+{\bar H}}\frac{1}{\sqrt{1-\lambda^2}} \right)\right)+
\sqrt{\frac{D}{\mu}}
\arctanh\left(\frac{{\bar H}+Q}{Q-\mu}\sqrt{\frac{\mu}{{\bar H}+\mu}}\sqrt{\lambda^2-\frac{({\bar H}+2Q-\mu)({\bar H}+\mu)}{({\bar H}+Q)^2}} \right),\\
\textrm{ if }\frac{\sqrt{({\bar H}+2Q-\mu)({\bar H}+\mu)}}{{\bar H}+Q}<\lambda<1,\\
-\sqrt{\frac{D}{{\bar H}+\mu}}\ln \left( \frac{Q-\mu}{Q+{\bar H}}\right)+\sqrt{\frac{D}{\mu}}
\arctanh\left(\frac{1}{Q-\mu}\sqrt{\frac{\mu}{{\bar H}+\mu}}\sqrt{({\bar H}+Q)^2-({\bar H}+2Q-\mu)({\bar H}+\mu)} \right),\textrm{ if } \lambda=1\\
\sqrt{\frac{D}{{\bar H}+\mu}}\left(\arccoth(\lambda)-\arcsinh \left(\frac{Q-\mu}{Q+{\bar H}}
\frac{1}{\sqrt{\lambda^2-1}} \right)\right)+
\sqrt{\frac{D}{\mu}}
\arctanh\left(\frac{{\bar H}+Q}{Q-\mu}\sqrt{\frac{\mu}{{\bar H}+\mu}}\sqrt{\lambda^2-\frac{({\bar H}+2Q-\mu)({\bar H}+\mu)}{({\bar H}+Q)^2}} \right),\\
\textrm{ if } 1<\lambda<\frac{\sqrt{({\bar H}+Q^2/\mu)({\bar H}+\mu)}}{{\bar H}+Q}
\end{cases}
$$
Recall that $Q$ is the weight parameter in the functional  of the term representing the average fish density along the coastline, and as mentioned before that it is small if conservationists' pressure is small (or if fisheries interests are dominant), and large otherwise. The results indicate that if this weight 
is too small, namely if $Q\leq \mu$, then no MPA should be established, regardless of the size of the coast. If on the other hand, this weight is large enough, namely if 
$Q>\mu$, then a single MPA taking the form of a Marine Reserve should be established provided that the coast is long enough. Precisely how long is determined by 
the minimal length $L_{\min}$ in $(\ref{unscaled-length})$. The existence of a minimal coast length that warrants the implementation of an MPA, is somewhat reminiscent of the existence of a minimal patch size which guarantees species persistence
in ecological models that take the form of reaction-diffusion equations \cite{cosner}.

The formula for $L_{\min}$ reveals that more mobile species (having larger $D$, and assuming that all 
other parameters remain fixed) require a larger minimal length to warrant the installation of an MPA. 
Similarly, larger $Q$ but keeping all other parameters fixed, leads to lower $L_{\min}$. In other words, if pressure by conservationists increases, then MPA's should be established for shorter coastal lengths. The investigation of how $L_{\min}$ depends on the other 
parameters ${\bar H}$ and $\mu$, can be done similarly using formula $(\ref{unscaled-length})$, by calculating the appropriate derivatives. 
In case an MPA is required, the results also indicate where it should be placed, namely in the middle of the coastline, 
and what size it should be, namely $2B$. A (admittedly ugly) formula for $B$ is available, and only implicitly, since it is based on 
the value $\lambda^*$ from $(\ref{inversie})$, which requires the inversion of the function $F$. Nevertheless, this formula is useful in assessing how $B$ depends on the various model parameters, in particular via numerical experimentation.

A similar analysis can be performed if the lethal boundary condition in $(\ref{unscaled})$ is replaced by a Neumann or zero-flux boundary condition at $X=-L/2$ and $+L/2$. The analysis is simpler, and leads to the following conclusions:
If $Q<\mu$, then the optimal harvesting rate is to fish at the maximal 
rate ${\bar H}$ everywhere (i.e. no MPA), whereas if $Q>\mu$, the optimal rate is $0$ everywhere 
(i.e. the entire coast line is a single MPA). In particular, there is no minimal coast length  warranting the implementation of an MPA like there is in the case of a lethal boundary condition. This shows that the choice of the 
boundary condition affects the optimal harvesting strategy in a major way. Current research is aimed at developing more 
realistic models of the fish dynamics such as including advection to model ocean currents, replacing the reaction terms that 
describes the natural fish dynamics, and investigating the problem in spatial dimensions higher than one.

\noindent  {\bf Acknowledgments}\\
I would like to thank VLAC (Vlaams Academisch Centrum) for hosting me as a research fellow and supporting me with a research grant,  during a sabbatical leave from the University of Florida. I am also grateful for support received from the University of Florida through a 
Faculty Enhancement Opportunity. Finally, I would like to thank the Universit{\'e} Catholique de Louvain for hosting me as a visiting professor in the 2011-2012 academic year.
\appendix
\renewcommand\thesection{Appendix \Alph{section}}

\section{Proof of Theorem $\ref{time-exp}$}
Consider the solution $\lambda(x)$ of system $(\ref{shoot})$. Since the initial condition belongs to the positive $\lambda_1$-axis, the 
vector field corresponding to $\lambda(x)$ is $F_1$, as long as the solution remains in the region ${\cal S}_a$. Let us first calculate the orbits of the system ${\dot \lambda}=F_1(\lambda)$ 
through points with coordinates $(\lambda_0,0)$. These are given by the solutions of the following ODE:
$$
\frac{d \lambda_2}{d \lambda_1}=\frac{\lambda_1}{a_1\lambda_2+1/l},
$$
and these are
$$
\lambda_1^2-a_1\lambda_2^2-\frac{2b_1}{l}\lambda_2=\lambda_0^2
$$
These orbits intersect the switching line where $\lambda_2=-1/l$,  if and only if
$$
\lambda_0^*\equiv \frac{\sqrt{2b_1-a_1}}{l}\leq \lambda_0
$$
Although solutions $\lambda(x)$ corresponding to initial conditions satisfying $0<\lambda_0<\lambda_0^*$ 
do not hit the switching line, it will be seen shortly that they do reach the $\lambda_2$-axis. 
To show this, solve the system ${\dot \lambda}=F_1(\lambda)$ with initial condition 
$(\lambda_1(0),\lambda_2(0))=(\lambda_0,0)$:
\begin{equation}\label{sol1}
\begin{pmatrix}
\lambda_1(x)\\
\lambda_2(x)
\end{pmatrix}=
\begin{pmatrix}
-\frac{b_1}{\sqrt{a_1}l}\sinh(\sqrt{a_1}x)+\lambda_0\cosh(\sqrt{a_1}x)\\
\frac{b_1}{a_1l}\left(\cosh(\sqrt{a_1}x)-1 \right)-\frac{\lambda_0}{\sqrt{a_1}}\sinh(\sqrt{a_1}x)
\end{pmatrix}
\end{equation}
Then $T_0(\lambda_0)$ can be calculated by solving $\lambda_1(T_0(\lambda_0))=0$ for $T_0(\lambda_0)$:
$$
T_0(\lambda_0)=\frac{1}{\sqrt{a_1}}\arctanh \left(\frac{\lambda_0}{i_1} \right)\textrm{ if } 0<\lambda_0<\lambda_0^*
$$
From now on focus on solutions $\lambda(x)$ corresponding to $\lambda_0\geq \lambda_0^*$. Note that these solutions 
cannot reach the $\lambda_2$-axis prior to 
crossing the switching line, since the region in the fourth quadrant that is 
to the right of the orbit of the solution corresponding to $\lambda_0=\lambda_0^*$ (which hits the switching line exactly when it reaches the $\lambda_2$-axis), and below the $\lambda_1$-axis, is forward invariant. Let us first determine in terms of the parameter $\lambda_0$, where these solutions hit the switching line. Call 
the $\lambda_1$-coordinate of this point ${\tilde \lambda_0}(\lambda_0)$. It follows from the expression of the orbits given above that
\begin{equation}\label{lambda-tilde}
{\tilde \lambda_0}(\lambda_0)=\sqrt{\lambda_0^2-(\lambda_0^*)^2}\textrm{ if }\lambda_0^*\leq \lambda_0
\end{equation}
Next, determine the smallest value of $x$ for which this happens, and call it $x_s(\lambda_0)$. It is determined by
setting $\lambda_2(x_s(\lambda_0))=-1/l$ in $(\ref{sol1})$:
$$
\frac{b_1/a_1-1}{b_1/a_1}=\cosh(\sqrt{a_1}x_s(\lambda_0))-\frac{\lambda_0}{i_1}\sinh(\sqrt{a_1}x_s(\lambda_0))
$$
Using basic properties of hyperbolic trig functions, 
solve this for $x_s(\lambda_0)$, and distinguish three cases, depending on how $\lambda_0$ compares to $i_1$:
\begin{equation}\label{switch-time}
x_s(\lambda_0)=\begin{cases}
\frac{1}{\sqrt{a_1}}\left(\arctanh\left(\frac{\lambda_0}{i_1}\right)-\arccosh \left( 
\frac{b_1/a_1-1}{b_1/a_1}\frac{1}{\sqrt{1-\left(\frac{\lambda_0}{i_1}\right)^2}}\right) \right)
\textrm{ if } \lambda_0^*\leq \lambda_0<i_1\\
-\frac{1}{\sqrt{a_1}}\ln \left( \frac{b_1/a_1-1}{b_1/a_1}\right) \textrm{ if }\lambda_0=i_1\\
\frac{1}{\sqrt{a_1}}\left(\arccoth\left(\frac{\lambda_0}{i_1}\right)-\arcsinh \left( 
\frac{b_1/a_1-1}{b_1/a_1}\frac{1}{\sqrt{\left(\frac{\lambda_0}{i_1}\right)^2-1}}\right) \right)
\textrm{ if } i_1<\lambda_0\\
\end{cases}
\end{equation}
Next find the orbits of the system ${\dot \lambda}=F_2(\lambda)$ through points with coordinates $({\tilde \lambda_0},-1/l)$ where 
${\tilde \lambda_0}>0$:
$$
\lambda_1^2-(\sqrt{a_2}\lambda_2-i_2 )^2+i_{s_2}^2={\tilde \lambda_0}^2
$$
These orbits intersect the $\lambda_2$-axis if and only if
$$
{\tilde \lambda_0}\leq i_{s_2},
$$
or equivalently, in terms of the parameter $\lambda_0$ via $(\ref{lambda-tilde})$:
$$
\lambda_0\leq \sqrt{i_{s_2}^2+(\lambda_0^*)^2}\equiv \lambda_0^{**}
$$
Since the orbits don't intersect the $\lambda_2$-axis if $\lambda_0^{**}<\lambda_0$, this implies also that
$$
T_0(\lambda_0)=+\infty \textrm{ if } \lambda_0^{**}<\lambda_0
$$
To conclude, find the value of $T_0(\lambda_0)$ for solutions $\lambda(x)$ of system $(\ref{shoot})$ that are such that 
$\lambda_0^*<\lambda_0<\lambda_0^{**}$. These are the solutions that cross the switching line once before reaching the $\lambda_2$-axis. 
The previous calculations show that they intersect the switching line at a point with coordinates $({\tilde \lambda_0},-1/l)$, where 
$0<{\tilde \lambda_0}<i_{s_2}$, and ${\tilde \lambda_0}$ can be expressed in terms of $\lambda_0$ via $(\ref{lambda-tilde})$.
Therefore, solve the system ${\dot \lambda}=F_2(\lambda)$ with initial condition 
$(\lambda_1(0),\lambda_2(0))=({\tilde \lambda_0},-1/l)$:
\begin{equation}\label{sol2}
\begin{pmatrix}
\lambda_1(x)\\
\lambda_2(x)
\end{pmatrix}=
\begin{pmatrix}
-\frac{\sqrt{a_2}}{l}\left(\frac{b_2}{a_2}-1 \right)\sinh(\sqrt{a_2}x)+{\tilde \lambda_0}\cosh(\sqrt{a_2}x)\\
\frac{1}{l}\left(\frac{b_2}{a_2}-1 \right)\cosh(\sqrt{a_2}x)
-\frac{{\tilde \lambda_0}}{\sqrt{a_2}}\sinh (\sqrt{a_2}x)
-\frac{b_2}{a_2l}
\end{pmatrix}
\end{equation}
and determine the  value of $x$ for which this solution reaches the $\lambda_2$-axis. Denote this $x$-value by $x_0({\tilde \lambda_0})$,  
and it can be calculated by setting $\lambda_1(x_0({\tilde \lambda_0}))=0$ in $(\ref{sol2})$ and solving for $x_0({\tilde \lambda_0})$:
$$
x_0({\tilde \lambda_0})=\frac{1}{\sqrt{a_2}}\arctanh \left(\frac{{\tilde \lambda_0}}{i_{s_2}} \right) \textrm{ for } 0<{\tilde \lambda_0}\leq i_{s_2}
$$
Notice that for 
$\lambda_0^*<\lambda_0<\lambda_0^{**}$, there holds that $T_0(\lambda_0)=x_s(\lambda_0)+x_0({\tilde \lambda_0})$, where 
${\tilde \lambda_0}$ can be expressed in terms of $\lambda_0$ via $(\ref{lambda-tilde})$. This finalizes the calculation of the 
function $T_0(\lambda_0)$ for all $\lambda_0>0$.

If it can be established that $T_0(\lambda_0)$ is an increasing function on the interval $(0,\lambda_0^{**})$, then the remaining statements 
in Theorem $\ref{time-exp}$ are straightforward. 
That $T_0$ is increasing if $\lambda_0 \in 
(0,\lambda_0^*)$ is obvious since $\arctanh$ is an increasing function. It will be shown that $dT_0/d\lambda_0>0$ on the intervals 
$(\lambda_0^*,i_1)$ and $(i_1,\lambda_0^{**})$, and this will conclude the proof of the Theorem.

\subsection{$\frac{dT_0}{d\lambda_0}>0$ if $\lambda_0 \in (\lambda_0^*,i_1)$}
There holds that
$$
T_0(\lambda_0)=
\frac{1}{\sqrt{a_1}}\left(\arctanh\left(\frac{\lambda_0}{i_1}\right)-\arccosh \left( 
\frac{b_1/a_1-1}{b_1/a_1}\frac{1}{\sqrt{1-\left(\frac{\lambda_0}{i_1}\right)^2}}\right) \right)+
\frac{1}{\sqrt{a_2}}\arctanh \left(\frac{\sqrt{\lambda_0^2-(\lambda_0^*)^2}}{i_{s2}} \right),
$$
and thus, since
$$
\frac{d}{dx}\left( \arctanh x\right)=\frac{1}{1-x^2}
\textrm{ and }\frac{d}{dx}\left(\arccosh x \right)=\frac{1}{\sqrt{x^2-1}},
$$
and occasionally using the definitions of $i_1$ in $(\ref{i1-en-i2})$, $i_{s,2}$ in 
$(\ref{twee-ineq})$ and $\lambda_0^*$ and $\lambda_0^{**}$ 
in $(\ref{grenzen})$ below respectively, that 
\begin{eqnarray*}
\frac{dT_0}{d\lambda_0}&=&
\frac{1}{\sqrt{a_1}}\left[\frac{1}{i_1}\frac{1}{1-(\lambda_0/i_1)^2}-\frac{b_1/a_1-1}{b_1/a_1}\frac{\lambda_0}{i_1^2}
\frac{1}{\left(1-(\lambda_0/i_1)^2 \right)^{3/2}}\frac{1}{\left(\left(\frac{b_1/a_1-1}{b_1/a_1}\right)^2\frac{1}{1-(\lambda_0/i_1)^2}-1 \right)^{1/2}}
\right]\\
&+& \frac{1}{\sqrt{a_2}i_{s_2}}\frac{\lambda_0}{(\lambda_0^2-(\lambda_0^*)^2)^{1/2}}\frac{i_{s,2}^2}{(\lambda_0^*)^2+i_{s,2}^2-\lambda_0^2}
\end{eqnarray*}
\newpage
\begin{eqnarray*}
\frac{dT_0}{d\lambda_0}
&=&
\frac{1}{\sqrt{a_1}}\left[
\frac{i_1}{i_1^2-\lambda_0^2}-\frac{\lambda_0}{(i_1^2-\lambda_0^2)\left(1-\left(\frac{b_1/a_1}{b_1/a_1-1} \right)^2
(1-(\lambda_0/i_1)^2) \right)^{1/2}}
\right]\\
&+& \frac{i_{s,2}}{\sqrt{a_2}}\frac{\lambda_0}{(\lambda_0^2-(\lambda_0^*)^2)^{1/2}}\frac{1}{(\lambda_0^{**})^2-\lambda_0^2}\\
&=&\frac{b_1}{a_1l}\frac{1}{i_1^2-\lambda_0^2}\left(1-\frac{\lambda_0/i_1}{\left(1-\left(\frac{b_1/a_1}{b_1/a_1-1} \right)^2
(1-(\lambda_0/i_1)^2) \right)^{1/2}} \right)\\
&+&(\frac{b_2}{a_2}-1)\frac{1}{l}\frac{\lambda_0}{\left( \lambda_0^2-(\lambda_0^*)^2\right)^{1/2}}
\frac{1}{(\lambda_0^{**})^2-\lambda_0^2}\\
&=&\frac{b_1}{a_1l}\frac{1}{i_1^2-\lambda_0^2}\left(1-\frac{b_1/a_1-1}{b_1/a_1}\frac{\lambda_0}{\left(\lambda_0-(\lambda_0^*)^2 \right)^{1/2}}\right)+
(\frac{b_2}{a_2}-1)\frac{1}{l}\frac{\lambda_0}{\left( \lambda_0^2-(\lambda_0^*)^2\right)^{1/2}}
\frac{1}{(\lambda_0^{**})^2-\lambda_0^2}\\
&=&\frac{1}{l}\frac{1}{\left(\lambda_0^2-(\lambda_0^*)^2 \right)^{1/2}}
\left(\frac{(b_1/a_1)\left(\lambda_0^2-(\lambda_0^*)^2\right)^{1/2}-((b_1/a_1)-1)\lambda_0}{i_1^2-\lambda_0^2}+
\frac{(b_2/a_2-1)\lambda_0}{(\lambda_0^{**})^2-\lambda_0^2}\right)
\end{eqnarray*}
The first factor in this product is positive, and thus the sign of $dT_0/d \lambda_0$ is determined by the sign of
\begin{eqnarray*}
\frac{1}{i_1^2-\lambda_0^2}\frac{1}{(\lambda_0^{**})^2-\lambda_0^2}
\left[\left((\lambda_0^{**})^2-\lambda_0^2\right)\left((b_1/a_1)\left(\lambda_0^2-(\lambda_0^*)^2\right)^{1/2}-((b_1/a_1)-1)\lambda_0\right)
+
(b_2/a_2-1)\lambda_0(i_1^2-\lambda_0^2)
\right]
\end{eqnarray*}
The first factor is positive, and so is the second, since
\begin{eqnarray*}
(\lambda_0^{**})^2-\lambda_0^2&=&(\lambda_0^*)^2+i_{s,2}^2-\lambda_0^2\\
&>&(\lambda_0^*)^2+i_{s,1}^2-\lambda_0^2\\
&=&i_1^2-\lambda_0^2>0,
\end{eqnarray*}
where the inequality $(\ref{twee-ineq})$ was used to establish the first inequality, and the definitions of 
$\lambda_0^*$ and $\lambda_0^{**}$ in $(\ref{grenzen})$, $i_{s,1}$ in $(\ref{twee-ineq})$, and  $i_1$ in $(\ref{i1-en-i2})$ in the equalities. 
Thus, the sign of $dT_0/d\lambda_0$ is determined by the sign of the third, square-bracketed factor in the product above. 
Multiply and divide it by 
$(b_1/a_1)\left(\lambda_0^2-(\lambda_0^*)^2\right)^{1/2}+((b_1/a_1)-1)\lambda_0$, and obtain -after simplifications 
using the definition of $\lambda_0^*$ in $(\ref{grenzen})$ and $i_1$ in $(\ref{i1-en-i2})$- that it equals:
\begin{equation}\label{bijna}
\frac{i_1^2-\lambda_0^2}{(b_1/a_1)\left(\lambda_0^2-(\lambda_0^*)^2\right)^{1/2}+((b_1/a_1)-1)\lambda_0}f(\lambda_0),
\end{equation}
where
\begin{equation}\label{fion}
f(\lambda_0):=
-\frac{2b_1-a_1}{a_1}\left[(\lambda_0^{**})^2-\lambda_0^2 \right]+
(b_2/a_2-1)\lambda_0\left[ (b_1/a_1)\left( \lambda_0^2-(\lambda_0^*)^2\right)^{1/2}+(b_1/a_1-1)\lambda_0\right]
\end{equation}
The fraction in $(\ref{bijna})$ is positive since $\lambda_0<i_1$ and $b_1/a_1-1>0$ (recall $(\ref{a-en-b})$). Notice also 
that since $b_i-a_i>0$ for $i=1,2$ (again by $(\ref{a-en-b})$), $f(\lambda_0)$ is an increasing function for all $\lambda_0>0$. 
Thus since $\lambda_0^*<\lambda_0$, there follows that
$$
f(\lambda_0^*)<f(\lambda_0).
$$
Expressing $\lambda_0^*$ and $\lambda_0^{**}$ in terms of the parameters $a_i$, $b_i$ and $l$, 
using $(\ref{grenzen})$ and $(\ref{twee-ineq})$, calculate
$$
f(\lambda_0^*)=\frac{2b_1-a_1}{l^2}\left(\frac{b_2}{a_2}-1 \right)\left[\frac{b_1-a_1-b_2+a_2}{a_1} \right]\equiv 0
$$
where the definitions of the $a_i$ and $b_i$, $i=1,2$, in $(\ref{a-en-b})$ were used to obtain the last equality.
Thus,
$$
0<f(\lambda_0)\textrm{ for } \lambda_0^*<\lambda_0,
$$
and therefore $dT_0/d \lambda_0>0$ whenever $\lambda_0\in (\lambda_0^*,i_1)$.

\subsection{$\frac{dT_0}{d\lambda_0}>0$ if $\lambda_0 \in (i_1,\lambda_0^{**})$}

There holds that
$$
\frac{1}{\sqrt{a_1}}\left(\arccoth\left(\frac{\lambda_0}{i_1}\right)-\arcsinh \left( 
\frac{b_1/a_1-1}{b_1/a_1}\frac{1}{\sqrt{\left(\frac{\lambda_0}{i_1}\right)^2-1}}\right) \right)+
\frac{1}{\sqrt{a_2}}\arctanh \left(\frac{\sqrt{\lambda_0^2-(\lambda_0^*)^2}}{i_{s2}} \right),
$$
and since 
$$
\frac{d}{dx}\left( \arccoth x\right)=\frac{1}{1-x^2}
\textrm{ and }\frac{d}{dx}\left(\arcsinh x \right)=\frac{1}{\sqrt{1+x^2}},
$$
similar calculations as in the previous subsection show that
\begin{eqnarray*}
\frac{dT_0}{d\lambda_0}&=&\frac{1}{\sqrt{a_1}}\left[
\frac{-i_1}{\lambda_0^2-i_1^2}+\frac{\lambda_0}{(\lambda_0^2-i_1^2)\left(1+\left(\frac{b_1/a_1}{b_1/a_1-1} \right)^2
((\lambda_0/i_1)^2-1) \right)^{1/2}}
\right]\\
&+& \frac{i_{s,2}}{\sqrt{a_2}}\frac{\lambda_0}{(\lambda_0^2-(\lambda_0^*)^2)^{1/2}}\frac{1}{(\lambda_0^{**})^2-\lambda_0^2}\\
&=&\frac{b_1}{a_1l}\frac{1}{\lambda_0^2-i_1^2}\left(-1+\frac{\lambda_0/i_1}{\left(1+\left(\frac{b_1/a_1}{b_1/a_1-1} \right)^2
((\lambda_0/i_1)^2-1) \right)^{1/2}} \right)\\
&+&(\frac{b_2}{a_2}-1)\frac{1}{l}\frac{\lambda_0}{\left( \lambda_0^2-(\lambda_0^*)^2\right)^{1/2}}
\frac{1}{(\lambda_0^{**})^2-\lambda_0^2}
\end{eqnarray*}
\newpage
\begin{eqnarray*}
\frac{dT_0}{d\lambda_0}
&=&\frac{b_1}{a_1l}\frac{1}{\lambda_0^2-i_1^2}\left(-1+\frac{b_1/a_1-1}{b_1/a_1}\frac{\lambda_0}{\left(\lambda_0-(\lambda_0^*)^2 \right)^{1/2}}\right)+
(\frac{b_2}{a_2}-1)\frac{1}{l}\frac{\lambda_0}{\left( \lambda_0^2-(\lambda_0^*)^2\right)^{1/2}}
\frac{1}{(\lambda_0^{**})^2-\lambda_0^2}\\
&=&\frac{1}{l}\frac{1}{\left(\lambda_0^2-(\lambda_0^*)^2 \right)^{1/2}}
\left(\frac{-(b_1/a_1)\left(\lambda_0^2-(\lambda_0^*)^2\right)^{1/2}+((b_1/a_1)-1)\lambda_0}{\lambda_0^2-i_1^2}+
\frac{(b_2/a_2-1)\lambda_0}{(\lambda_0^{**})^2-\lambda_0^2}\right)
\end{eqnarray*}
Since the first factor in this product is positive, the sign of $dT_0/d\lambda_0$ is determined by the sign of
$$
\frac{1}{\lambda_0^2-i_1^2}\frac{1}{(\lambda_0^{**})^2-\lambda_0^2}
\left[\left((\lambda_0^{**})^2-\lambda_0^2\right)\left(-(b_1/a_1)\left(\lambda_0^2-(\lambda_0^*)^2\right)^{1/2}+((b_1/a_1)-1)\lambda_0\right)
+
(b_2/a_2-1)\lambda_0(\lambda_0^2-i_1^2)
\right]
$$
The first factor and second factors are positive, implying that the sign of the third factor 
determines the sign of $dT_0/d \lambda_0$. Multiplying and dividing this factor by 
$(b_1/a_1)\left(\lambda_0^2-(\lambda_0^*)^2\right)^{1/2}+((b_1/a_1)-1)\lambda_0$, and obtain -after simplifications 
using the definition of $\lambda_0^*$ in $(\ref{grenzen})$ and $i_1$ in $(\ref{i1-en-i2})$- that it equals:
\begin{equation}\label{bijna2}
\frac{\lambda_0^2-i_1^2}{(b_1/a_1)\left(\lambda_0^2-(\lambda_0^*)^2\right)^{1/2}+((b_1/a_1)-1)\lambda_0}f(\lambda_0),
\end{equation}
where $f(\lambda_0)$ was defined in $(\ref{fion})$ in the previous subsection. The fraction in $(\ref{bijna2})$ is positive since 
$i_1<\lambda_0$ and $b_1/a_1-1>0$ by $(\ref{a-en-b})$. Moreover, the function $f$ is increasing for all $\lambda_0>0$, and 
it was shown in the previous subsection that $f(\lambda_0^*)=0$. But here $i_1<\lambda_0$, and since $\lambda_0^*<i_1$, it follows 
that $f(\lambda_0)>0$ for all $\lambda_0\in (i_1,\lambda_0^{**})$, and therefore $dT_0/d\lambda_0>0$ for $\lambda_0\in(i_1,\lambda_0^{**})$.

\section{Proof of Theorem $\ref{location}$}
Let ${\bar \lambda_0}$ be the unique value of $\lambda_0$ satisfying $(\ref{lambda-bar})$. From the proof of Theorem $\ref{time-exp}$ follows that ${\bar \lambda_0}\in (\lambda_0^*,\lambda_0^{**})$. This proof also reveals that 
$T_s({\bar \lambda_0})=x_s({\bar \lambda_0})$, where $x_s(\lambda_0)$ is defined in $(\ref{switch-time})$.

\end{document}